\documentclass[12pt]{amsart}
\usepackage{a4wide,enumerate,color}
\usepackage{amsmath}
\usepackage{scrextend} 
\usepackage{graphicx}
\usepackage{caption}
\allowdisplaybreaks

\let\pa\partial  
\let\na\nabla  
\let\eps\varepsilon  
\newcommand{\N}{{\mathbb N}}  
\newcommand{\R}{{\mathbb R}} 
\newcommand{\diver}{\operatorname{div}}

\newcommand{\M}{{\mathcal M}}
\newcommand{\T}{{\mathcal T}}
\newcommand{\E}{{\mathcal E}}
\newcommand{\Pt}{{\mathcal P}}
\newcommand{\DD}{{\mathcal D}}
\newcommand{\F}{{\mathcal F}}
\newcommand{\dist}{{\mathrm d}}
\newcommand{\m}{{\mathrm m}}
\newcommand{\dom}{{\mathcal O}}

\newtheorem{thrm}{Theorem}[section]
\newtheorem{prpstn}{Proposition}[section]

\newtheorem{lem}{Lemma}[section]
\newtheorem{rmrk}{Remark}[section]

 
\begin{document}  

\title[Biofilm cross-diffusion system]{Convergence of a finite-volume scheme for
a degenerate-singular cross-diffusion system \\ for biofilms}

\author[E. S. Daus]{Esther S. Daus}
\address{Institute for Analysis and Scientific Computing, Vienna University of  
	Technology, Wiedner Hauptstra\ss e 8--10, 1040 Wien, Austria}
\email{esther.daus@tuwien.ac.at} 

\author[A. J\"ungel]{Ansgar J\"ungel}
\address{Institute for Analysis and Scientific Computing, Vienna University of  
	Technology, Wiedner Hauptstra\ss e 8--10, 1040 Wien, Austria}
\email{juengel@tuwien.ac.at} 

\author[A. Zurek]{Antoine Zurek}
\address{Institute for Analysis and Scientific Computing, Vienna University of  
	Technology, Wiedner Hauptstra\ss e 8--10, 1040 Wien, Austria}
\email{antoine.zurek@tuwien.ac.at} 

\date{\today}

\thanks{The authors acknowledge partial support from
the French-Austrian Amad\'ee project of the OeAD Austria. 
The first and second authors have been supported by the Austrian Science Fund (FWF), 
grants P33010, P30000, W1245, and F65.} 

\begin{abstract}
An implicit Euler finite-volume scheme for a cross-diffusion system modeling 
biofilm growth is analyzed by exploiting its formal gradient-flow structure. 
The numerical scheme is based on a two-point flux approximation that preserves the 
entropy structure of the continuous model. Assuming equal diffusivities, 
the existence of nonnegative and bounded 
solutions to the scheme and its convergence are proved. Finally, we supplement 
the study by numerical experiments in one and two space dimensions.
\end{abstract}

\keywords{Biofilm modeling, finite volumes, structure-preserving numerical
scheme.}  
 
\subjclass[2000]{35K51, 35K65, 35K67, 35Q92}  

\maketitle


\section{Introduction}

Biofilms are organized, cooperating communities of microorganisms. They can be used
for the treatment of wastewater \cite{CaRo92,NVH00}, as they help to reduce sulfate and
to remove nitrogen. Typically, biofilms consist of several species
such that multicomponent fluid models need to be considered. 
Recently, a multi-species biofilm model was introduced by Rahman, Sudarsan, and
Eberl \cite{RSE15}, which reflects the same properties as the single-species
diffusion model of \cite{EPV01}. The model has a porous-medium-type
degeneracy when the local biomass vanishes, and a singularity when the biomass 
reaches the maximum capacity, which guarantees the boundedness of the total mass.
The model was derived formally from a space-time discrete walk
on a lattice in \cite{RSE15}. The global existence of weak solutions to the
single-species model was proved in \cite{EZE09}, while the global existence
analysis for the multi-species cross-diffusion system can be found in \cite{DMZ18}.
The proof of the multi-species model is based on an entropy method which also
provides the boundedness of the biomass hidden in its entropy structure.
Numerical simulations were performed in \cite{DMZ18,RSE15}, but no numerical
analysis was given. In this paper, we analyze an implicit Euler finite-volume
scheme of the multi-species system that preserves the structure of the
continuous model, namely positivity, boundedness, and discrete entropy production.

The model equations for the proportions of the biofilm species $u_i$ are given by
\begin{equation}\label{1.eq}
  \pa_t u_i + \diver \F_i = 0, \quad 
	\F_i = -\alpha_i p(M)^2\na\frac{u_iq(M)}{p(M)}
	\quad\mbox{in }\Omega,\ t>0,\ i=1,\ldots,n,
\end{equation}
where $\Omega\subset\R^d$ ($d\geq 1$) is a bounded domain, 
$\alpha_i>0$ are some diffusion coefficients, 
and $M=\sum_{i=1}^n u_i$ is the total biomass. 
The proportions $u_i(x,t)$ are nonnegative and satisfy $M\leq 1$.
We have assumed for simplicity that the functions $p$ and $q$ only depend on the
total biomass and are the same for all species. 
The function $p\in C^1([0,1])$ is decreasing and satisfies $p(1)=0$, and
$q$ is defined by 
\begin{equation}\label{1.q}
  q(M) := \frac{p(M)}{M}\int_0^M\frac{s^a}{(1-s)^b}\frac{ds}{p(s)^2}, \quad M>0,
\end{equation}
where $a$, $b\geq 1$.
Equations \eqref{1.eq} are complemented by initial and mixed boundary conditions:
\begin{align}
  & u_i(0) = u_i^0\quad\mbox{in }\Omega,\ i=1,\ldots,n, \label{1.ic} \\
	& u_i = u_i^D\quad\mbox{on }\Gamma^D, \quad \na \F_i\cdot\nu = 0\quad\mbox{on }
	\Gamma^N, \label{1.bc}
\end{align}
where $\Gamma^D$ is the contact boundary part, $\Gamma^N$ is the union
of isolating boundary parts, and $\pa\Omega=\Gamma^D\cup\Gamma^N$.

We recover the single-species model if all species are the same and 
all diffusivities $\alpha_i$ are equal, $\alpha_i=1$ for $i=1,\ldots,n$. Indeed, 
summing \eqref{1.eq} over $i=1,\ldots,n$, it follows that
\begin{align}\label{1.eqM}
  \pa_t M = \diver\bigg(p(M)^2\na\frac{Mq(M)}{p(M)}\bigg) 
	= \diver\bigg(\frac{M^a}{(1-M)^b}\na M\bigg),
\end{align}
which makes the degenerate-singular structure of the model evident.

Equations \eqref{1.eq} can be written as the cross-diffusion system
\begin{equation}\label{1.eq2}
  \pa_t u_i - \diver\bigg(\sum_{j=1}^n A_{ij}(u)\na u_j\bigg) = 0
	\quad\mbox{in }\Omega,\ t>0,
\end{equation}
where the nonlinear diffusion coefficients are defined by
\begin{equation}\label{1.A}
  A_{ij}(u) = \alpha_i\delta_{ij}p(M)q(M) + \alpha_i u_i\big(p(M)q'(M)-p'(M)q(M)\big),
	\quad i,j=1,\ldots,n.
\end{equation}
Due to the cross-diffusion structure, standard techniques like the maximum principle
and regularity theory cannot be used. Moreover, the diffusion matrix
$(A_{ij}(u))$ is generally neither symmetric nor positive definite.

The key of the analysis, already observed in \cite{DMZ18}, is that system 
\eqref{1.eq2}-\eqref{1.A} allows for an entropy or formal gradient-flow structure.
Indeed, introduce the (relative) entropy
\begin{align*}
  & H(u) = \int_\Omega h^*(u|u^D)dx, \quad\mbox{where} \\
	& h^*(u|u^D) = h(u) - h(u^D) - h'(u^D)\cdot(u-u^D), \\
  & h(u) = \sum_{i=1}^n\big(u_i(\log u_i-1)+1\big)
	+ \int_0^M\log\frac{q(s)}{p(s)}ds,
\end{align*}
defined on the set 
\begin{equation}\label{1.O}
  \dom = \bigg\{u=(u_1,\ldots,u_n)\in(0,\infty)^n: \sum_{i=1}^n u_i<1\bigg\}.
\end{equation}
A computation gives the entropy identity \cite[Theorem 2.1]{DMZ18}
\begin{equation}\label{1.edi}
  \frac{dH}{dt} + 2\sum_{i=1}^n\alpha_i\int_\Omega p(M)^2\bigg|\na
	\sqrt{\frac{u_iq(M)}{p(M)}}\bigg|^2 dx = 0.
\end{equation}
Thus, $H$ is a Lyapunov functional along the solutions to \eqref{1.eq}.
Moreover, under some assumptions on $p$, the entropy production term 
(the second term on the left-hand side)
can be bounded from below, for some constant $C>0$, by
\begin{align}
  \sum_{i=1}^n&\alpha_i\int_\Omega p(M)^2\bigg|\na
	\sqrt{\frac{u_iq(M)}{p(M)}}\bigg|^2 dx \nonumber \\
  &\ge C\int_\Omega\frac{M^{a-1}|\na M|^2}{(1-M)^{1+b+\kappa}} dx
	+ \sum_{i=1}^n\int_\Omega p(M)q(M)|\na\sqrt{u_i}|^2 dx, \label{1.ineq}
\end{align}
yielding suitable gradient estimates. Moreover, it implies that
$(1-M)^{1-b-\kappa}$ is integrable, showing that $M<1$ a.e.\ in $\Omega$, $t>0$,
which excludes biofilm saturation and allows us to define the nonlinear terms.

Another feature of the entropy method is that equations \eqref{1.eq}, written in
the so-called entropy variables $w_i=\pa h^*/\pa u_i$, can be written as the
formal gradient-flow system
$$
  \pa_t u - \diver(B(w)\na w) = 0,
$$
with a positive semidefinite diffusion matrix $B$. Since the derivative
$(h^*)':\dom\to\R^n$ is invertible \cite[Lemma 3.3]{DMZ18}, 
$u$ can be interpreted as a function of $w$, $u(w)=[(h^*)']^{-1}(w)$,
mapping $\R^n$ to $\dom$. This gives automatically $u(w)\in\dom$
and consequently $L^\infty$ bounds.
This property, for another volume-filling model, was first observed in
\cite{BDPS10} and later generalized in \cite{Jue15}. 

The aim of this paper is to reproduce the above-mentioned properties 
on the discrete level.
For this, we suggest an implicit Euler scheme in time (with time step size
$\Delta t$) and a finite-volume discretization in space
(with grid size parameter $\Delta x$), based on two-point approximations. 
The challenge is to formulate the discrete fluxes such that the scheme preserves the entropy structure of the model and to design the fluxes such that we are able 
to establish the upper bound $M < 1$ a.e.\ in $\Omega$, $t>0$. We suggest the
discrete fluxes \eqref{sch2}, where the coefficient $p(M)^2$ is replaced by
$(p(M_K)^2+p(M_L)^2)/2$, and $K$ and $L$ are two neighboring control volumes
with a common edge (see Section \ref{sec.nota} for details). We establish a discrete counterpart of~\eqref{1.ineq} in Lemma~\ref{lem.lower}. This result is proved by exploiting the properties
of the functions $p$ and $q$ as in \cite[Lemma 3.4]{DMZ18} and distinguishing carefully
the cases $M\le 1-\delta$ and $M>1-\delta$ for sufficiently small $\delta>0$. However, due to the lack of a chain rule at the discrete level, we cannot conclude that the ``discrete'' biomass satisfies $M < 1$. To overcome this issue, we need to assume that the diffusivities are all equal. Then, summing the finite-volume analog of
\eqref{1.eq} over $i=1,\ldots,n$, 
we obtain a discrete analog of the diffusion equation \eqref{1.eqM} for $M$ 
that allows us to apply a discrete maximum principle, leading to $M<1$. 

Our results can be sketched as follows 
(see Section \ref{sec.main} for the precise statements):
\begin{itemize}
\item[(i)] We prove the existence of finite-volume solutions with 
nonnegative discrete proportions $u_{i,K}$ and discrete total
biomass $M_K<1$ for all control volumes $K$.
\item[(ii)] The discrete solution satisfies a discrete analog of the entropy equality
(which becomes an inequality in \eqref{2.edi}) and of the lower bound
\eqref{1.ineq} for the entropy production.
\item[(iii)] The discrete solution converges in a certain sense, 
for mesh sizes $(\Delta x,\Delta t)\to 0$,
to a weak solution to \eqref{1.eq}.
\end{itemize}
Let us notice that even if the assumption on the diffusion coefficients provides 
an upper bound for $M$, we cannot establish the nonnegativity of the densities 
$u_i$ by using a maximum principle. Instead, we adapt at the discrete level the so-called boundedness-by-entropy method, introduced in~\cite{BDPS10} and developed 
in~\cite{Jue15}, to a finite-volume scheme. This approach allows us to prove 
that the solutions to the nonlinear scheme proposed in this paper satisfy 
the properties~(i)-(iii); see Theorems~\ref{thm.ex} and \ref{thm.conv}. 
The adaptation of this technique represents the main originality of this work.

There are several finite-volume schemes for other cross-diffusion systems 
in the mathematical literature. For instance, an upwind two-point flux
approximation was used in \cite{Ait18} for a seawater intrusion model.
A positivity-preserving two-point flux approximation for a two-species
population system was suggested in \cite{ABB11}. The Laplacian structure of
the population model was exploited in \cite{Mur17} to design a convergent linear
finite-volume scheme, avoiding fully implicit approximations. Cross-diffusion
systems with nonlocal (in space) terms modeling food chains and epidemics
were approximated in \cite{ABLS15,ABS15}. The convergence of the finite-volume
scheme of a degenerate cross-diffusion system arising in ion transport was
shown in \cite{CCGJ19}, and the existence of a finite-volume
scheme for a population cross-diffusion system was proved in \cite{JuZu19}.

A finite-volume scheme for the biofilm growth, coupled with the computation
of the surrounding fluid flow, was presented in \cite{YaUe13}.
Finite-volume-based simulations of biofilm processes in axisymmetric reactors were
given in \cite{SCC93}. Closer to our numerical study is the work \cite{RaEb14},
where the single-species biofilm model was discretized using finite volumes,
but without any numerical analysis. In this paper, we prove the existence of
discrete solutions and the convergence of the finite-volume scheme for 
\eqref{1.eq} for the first time.

The paper is organized as follows. The notation and assumptions on the mesh
as well as the main theorems are introduced in Section \ref{sec.num}.
The existence of discrete solutions is proved in Section \ref{sec.ex},
based on a topological degree argument. We show a gradient estimate, an estimate of
the discrete time derivative, and the lower bound for the entropy production
in Section \ref{sec.apriori}. These estimates allow us in Section \ref{sec.convsol}
to apply the discrete compactness argument in \cite{ACM17} 
to conclude the a.e.\ convergence of the proportions and to show the
convergence of the discrete gradient associated to $\na(u_iq(M)/p(M))$.
The convergence of the scheme is then proved in Section \ref{sec.convsch}. 
In Section \ref{sec.exp.num}, we present some numerical results in one and two 
space dimensions. They illustrate the $L^2$-convergence rate in space of the 
numerical scheme and show the convergence of the solutions to the steady states.


\section{Numerical scheme and main results}\label{sec.num}

In this section, we introduce the numerical scheme and detail our main results.

\subsection{Notation and assumptions}\label{sec.nota}

Let $\Omega\subset\R^2$ be an open, bounded, 
polygonal domain with $\pa\Omega=\Gamma^D\cup\Gamma^N\in C^{0,1}$, 
$\Gamma^D\cap\Gamma^N=\emptyset$, and $\operatorname{meas}(\Gamma^D)>0$.
We consider only two-dimensional domains $\Omega$, but the generalization to
higher dimensions is straightforward.
An admissible mesh $\M=(\T,\E,\Pt)$ of $\Omega$ is given by a family $\T$ of open
polygonal control volumes (or cells), 
a family $\E$ of edges, and a family $\Pt$ of points
$(x_K)_{K\in\T}$ associated to the control volumes and satisfying Definition 
9.1 in \cite{EGH00}. This definition implies that the straight line between two 
centers of neighboring cells $\overline{x_Kx_L}$ 
is orthogonal to the edge $\sigma=K|L$ between two
cells $K$ and $L$. The condition is satisfied by, for instance, triangular meshes
whose triangles have angles smaller than $\pi/2$ \cite[Examples 9.1]{EGH00}
or Vorono\"i meshes \cite[Example 9.2]{EGH00}.

The family of edges $\E$ is assumed to consist of the interior edges 
$\sigma\in\E_{\rm int}$ satisfying $\sigma\in\Omega$ and the boundary edges 
$\sigma\in\E_{\rm ext}$ fulfilling $\sigma\subset\pa\Omega$.
We suppose that each exterior edge is an element of
either the Dirichlet or Neumann boundary, i.e.\ $\E_{\rm ext}=\E_{\rm ext}^D\cup
\E_{\rm ext}^N$. For a given control volume $K\in\T$, we denote by $\E_K$ the
set of its edges. This set splits into $\E_K=\E_{{\rm int},K}\cup
\E_{{\rm ext},K}^D\cup\E_{{\rm ext},K}^N$. 
For any $\sigma\in\E$, there exists at least one cell
$K\in\T$ such that $\sigma\in\E_K$. We denote this cell by $K_\sigma$.
When $\sigma$ is an interior cell, $\sigma=K|L$, $K_\sigma$ can be either $K$
or $L$. 

Let $\sigma\in\E$ be an edge. We define
$$
  \dist_\sigma = \left\{\begin{array}{ll}
	\dist(x_K,x_L) &\quad\mbox{if }\sigma=K|L\in\E_{{\rm int}}, \\
	\dist(x_K,\sigma) &\quad\mbox{if }\sigma\in\E_{{\rm ext},K},
	\end{array}\right.
$$
where d is the Euclidean distance in $\R^2$. 
The transmissibility coefficient is defined by
\begin{equation}\label{2.trans}
  \tau_\sigma = \frac{\m(\sigma)}{\dist_\sigma},
\end{equation}
where $\m(\sigma)$ denotes the Lebesgue measure of $\sigma$. We assume that
the mesh satisfies the following regularity requirement: There exists $\xi>0$
such that 
\begin{equation}\label{2.regmesh}
  \dist(x_K,\sigma) \ge \xi\dist_\sigma\quad\mbox{for all }K\in\T,\ \sigma\in\E_K.
\end{equation}
This hypothesis is needed to apply a discrete Sobolev inequality; see
\cite{BCF15}.

The size of the mesh is denoted by $\Delta x=\max_{K\in\T}\operatorname{diam}(K)$.
Let $N_T\in\N$ be the number of time steps, $\Delta t=T/N_T$ be the
time step and set $t_k=k\Delta t$ for $k=0,\ldots,N_T$.
We denote by $\DD$ an admissible space-time discretization of
$Q_T:=\Omega\times(0,T)$ composed of an admissible mesh $\M$ of $\Omega$
and the values $(\Delta t,N_T)$. The size of $\DD$ is defined by
$\eta:=\max\{\Delta x,\Delta t\}$. 

As it is usual for the finite-volume method, we introduce functions that are
piecewise constant in space and time. A finite-volume scheme provides a vector
$v_\T=(v_K)_{K\in\T}\in\R^{\#\T}$ of approximate values of a function $v$
and the associate piecewise constant function, still denoted by $v_\T$,
$$
  v_\T = \sum_{K\in\T}v_K\mathbf{1}_K,
$$
where $\mathbf{1}_K$ is the characteristic function of $K$. The vector
$v_\M$, containing the approximate values in the control volumes and the
approximate values on the Dirichlet boundary edges, is written as 
$v_\M=(v_\T,v_{\E^D})$, where $v_{\E^D}=(v_\sigma)_{\sigma\in\E_{{\rm ext}}^D}
\in\R^{\#\E_{\rm ext}^D}$. For a vector $v_\M$, we introduce for $K\in\T$
and $\sigma\in\E_K$ the notation
\begin{equation}\label{2.vKsigma}
  v_{K,\sigma} = \left\{\begin{array}{ll}
	v_L &\quad\mbox{if }\sigma=K|L\in\E_{{\rm int},K}, \\
	v_\sigma &\quad\mbox{if }\sigma\in\E_{{\rm ext},K}^D, \\
	v_K &\quad\mbox{if }\sigma\in\E_{{\rm ext},K}^N
	\end{array}\right.
\end{equation}
and the discrete gradient
\begin{equation}\label{2.Dsigma}
  D_\sigma v := |D_{K,\sigma}v|, \quad\mbox{where }D_{K,\sigma}v = v_{K,\sigma}-v_K.
\end{equation}
The discrete $H^1(\Omega)$ seminorm and the (squared) discrete $H^1(\Omega)$ norm are then
defined by
\begin{align}\label{2.norm}
  |v_\M|_{1,2,\M} = \bigg(\sum_{\sigma\in\E}\tau_\sigma(D_\sigma v)^2\bigg)^{1/2},
	\quad \|v_\M\|^2_{1,2,\M} = \|v_\M\|^2_{0,2,\M} + |v_\M|^2_{1,2,\M},
\end{align}
where $\|\cdot\|_{0,p,\M}$ denotes the $L^p(\Omega)$ norm
$$
  \|v_\M\|_{0,p,\M} = \bigg(\sum_{K\in\T}\m(K)|v_K|^p\bigg)^{1/p}, \quad \forall 1 \leq p < \infty.
$$

Thanks to the regularity assumption \eqref{2.regmesh} and the fact that
$\Omega$ is two-dimensional, we have
\begin{equation}\label{2.estmesh}
  \sum_{K\in\T}\sum_{\sigma\in\E_K}\m(\sigma)\dist(x_K,\sigma)
	\le 2\sum_{K\in\T}\m(K) = 2\m(\Omega).
\end{equation}


\subsection{Numerical scheme}

We are now in the position to define the finite-volume discretization of 
\eqref{1.eq}-\eqref{1.bc}. Let $\DD$ be a finite-volume discretization of $Q_T$.
The initial and boundary conditions are discretized by the averages
\begin{align}
  u_{i,K}^0 &= \frac{1}{\m(K)}\int_K u_i^0(x)dx \quad\mbox{for }K\in\T,
  \label{sch.ic} \\
  u_{i,\sigma}^D &= \frac{1}{\m(\sigma)}\int_\sigma u_i^Dds \quad\mbox{for }
	\sigma\in\E_{\rm ext}^D,\ i=1,\ldots,n. \label{sch.bc}
\end{align}
We suppose for simplicity that the Dirichlet datum is constant on $\Gamma^D$
such that $u_{i,\sigma}^D=u_i^D$ for $i=1,\ldots,n$. Furthermore, we set
$u_{i,\sigma}^k=u_{i,\sigma}^D$ for $\sigma\in\E_{\rm ext}^D$ at time $t_k$.

Let $u_{i,K}^k$ be an approximation of the mean value of $u_i(\cdot,t_k)$
in the cell $K$. Then the implicit Euler finite-volume scheme reads as
\begin{align}
  & \frac{\m(K)}{\Delta t}(u_{i,K}^{k}-u_{i,K}^{k-1})
	+ \sum_{\sigma\in\E_K}\F_{i,K,\sigma}^{k} = 0, 
	\label{sch1} \\
	& \F_{i,K,\sigma}^{k} = -\tau_\sigma\alpha_i(p_\sigma^{k})^2
	D_{K,\sigma}\bigg(\frac{u_i^{k} q(M^{k})}{p(M^{k})}\bigg), \label{sch2}
\end{align}
where $K\in\T$, $\sigma\in\E_K$, $i=1,\ldots,n$, and the value $p_\sigma^{k}$
is defined by
\begin{equation}\label{2.psigma}
  (p_\sigma^{k})^2 := \frac{p(M_K^{k})^2+p(M_{K,\sigma}^{k})^2}{2},
\end{equation}
recalling definition \eqref{2.trans} for $\tau_\sigma$ and notation 
\eqref{2.vKsigma} for $M^k_{K,\sigma}$. 

Observe that definitions \eqref{2.vKsigma} and \eqref{2.Dsigma} ensure that
the discrete fluxes vanish on the Neumann boundary edges, i.e.\
$\F_{i,K,\sigma}^{k}=0$ for all $\sigma\in\E_{{\rm ext},K}^N$, $k\in\N$,
and $i=1,\ldots,n$. This is consistent with the Neumann boundary conditions
in \eqref{1.bc}. 

For the convergence result, we need to define the discrete gradients.
To this end, let the vector $u_\M=(u_\T,u_{\E^D})$ as defined before. Then we introduce
the piecewise constant approximation $u_\DD = (u_{1,\DD},\ldots,u_{n,\DD})$ by
\begin{align}\label{reconstruc}
  u_{i,\DD}(x,t) = \sum_{K\in\T}u_{i,K}^k\mathbf{1}_K(x)
	&\quad\mbox{for }x\in\Omega,\ t\in(t_{k-1},t_k], \\
	\label{reconstrucbord}
	u_{i,\DD}(x,t) = u_i^D &\quad\mbox{for }x\in\Gamma^D, i=1,\ldots,n.
\end{align}
For given $K\in\T$ and $\sigma\in\E_K$,
we define the cell $T_{K,\sigma}$ of the dual mesh as follows:
\begin{itemize}
\item If $\sigma=K|L\in\E_{{\rm int},K}$, then $T_{K,\sigma}$ is that cell 
(``diamond'') whose
vertices are given by $x_K$, $x_L$, and the end points of the edge $\sigma$.
\item If $\sigma\in\E_{{\rm ext},K}$, then $T_{K,\sigma}$ is that cell (``triangle'')
whose vertices are given by $x_K$ and the end points of the edge $\sigma$.
\end{itemize}
An example of a construction of such a dual mesh can be found in \cite{CLP03}.
The cells $T_{K,\sigma}$ define a partition of $\Omega$. The definition of the 
dual mesh implies the following properties:
\begin{itemize}
\item 
As the straight line between two neighboring centers of cells $\overline{x_Kx_L}$ is
orthogonal to the edge $\sigma=K|L$, it follows that
\begin{equation}\label{2.para}
  \m(\sigma)\dist(x_K,x_L) = 2\m(T_{K,\sigma}) \quad\mbox{for all }
	\sigma=K|L\in \E_{{\rm int},K}.
\end{equation}
\item The property $\m(T_{K,\sigma})=\m(T_{L,\sigma})$ for
$\sigma=K|L\in\E_{{\rm int},K}$ implies that
\begin{equation*}
  \sum_{\substack{\sigma\in\E \\ K=K_\sigma}}\m(T_{K,\sigma}) \le 2\m(\Omega),
\end{equation*}
where the sum is over all edges $\sigma\in\E$, and to each given $\sigma$ 
we associate the cell $K=K_\sigma$.
\end{itemize}
We define the approximate gradient
of a piecewise constant function $u_\DD$ in $Q_T$ given by~\eqref{reconstruc}-\eqref{reconstrucbord} as follows:
$$
  \na^\DD u_{\DD}(x,t) = \frac{\m(\sigma)}{\m(T_{K,\sigma})}D_{K,\sigma} u^k \, \nu_{K,\sigma}
	\quad\mbox{for }x\in T_{K,\sigma},\ t\in(t_{k-1},t_k],
$$
where the discrete operator $D_{K,\sigma}$ is given in \eqref{2.Dsigma} and $\nu_{K,\sigma}$ is
the unit vector that is normal to $\sigma$ and points outward of $K$.


\subsection{Main results}\label{sec.main}

Our first result guarantees that scheme \eqref{sch.ic}-\eqref{2.psigma} possesses
a solution and that it preserves the entropy dissipation property.
Let us collect our assumptions:

\begin{labeling}{(H4)}
\item[(H1)] Domain: $\Omega\subset\R^2$ is a bounded
polygonal domain with Lipschitz boundary $\pa\Omega=\Gamma^D\cup\Gamma^N$,
$\Gamma^D\cap\Gamma^N=\emptyset$, and $\operatorname{meas}(\pa\Gamma^D)>0$.

\item[(H2)] Discretization: $\DD$ is an admissible discretization of $Q_T$
satisfying the regularity condition \eqref{2.regmesh}.

\item[(H3)] Data: $u^0=(u_1^0,\ldots,u_n^0)\in L^2(\Omega;[0,\infty)^n)$ ,
$u^D=(u_1^D,\ldots,u_n^D)\in(0,\infty)^n$ is a constant vector, 
$\sum_{i=1}^n u_i^0<1$ in $\Omega$, $\sum_{i=1}^n u^D_i<1$, and
$\alpha_1,\ldots,\alpha_n>0$, $a$, $b\geq 1$.

\item[(H4)] Functions: $p\in C^1([0,1];[0,\infty))$ is decreasing, $p(1)=0$,
and there exist $c$, $\kappa>0$ such that $\lim_{M\to 1}(-(1-M)^{1+\kappa}p'(M)/p(M))=c$.
The function $q$ is defined in \eqref{1.q}. 
\end{labeling}

For our main results, we need the following technical assumption:

\begin{labeling}{(A1)}
\item[(A1)] The diffusion constants are equal, $\alpha_i = 1$ for $i=1,\ldots,n$.
\end{labeling}

\begin{rmrk}[Discussion of the hypotheses]\rm 
	
The assumption on the behavior of $p$ when $M\to 1$ quantifies how fast this
function decreases to zero as $M\to 1$. An integration implies the bound
$$
  p(M) \le K_1\exp(-K_2(1-M)^{-\kappa}) \quad\mbox{for }0<M<1,
$$
with $K_1$ and $K_2$ some positive constants. We imposed this technical assumption to show a discrete version of~\eqref{1.ineq},
following the proof of \cite[Lemma 3.4]{DMZ18}; see Lemma~\ref{lem.lower}. The lower 
bound on the entropy production term is needed to prove the convergence result.

The upper bound for $p$ is also used in \cite{DMZ18} to deduce an estimate for
$(1-M)^{1-b-\kappa}$ in $L^1(\Omega)$, impliying that $M<1$ in $\Omega$.
Unfortunately, this estimate requires the multiple use of the chain rule
which is not available on the discrete level. Therefore, we assume that
the diffusivities $\alpha_i$ are equal and apply a weak maximum principle 
to the equation for $M^k$ to deduce the bound $M_K^k<1$ for all $K\in\T$.

In \cite{DMZ18}, the parameters in the definition \eqref{1.q} of $q$ need to satisfy
$a$, $b>1$. We are able to allow for the slightly weaker condition $a$, $b\ge 1$;
this is possible since we allow for equal diffusivities (condition (A1)).
\qed
\end{rmrk}

We introduce the discrete entropy
\begin{equation*} 
  H(u_\M^k) = \sum_{K\in\T}\m(K)h^*(u_K^k|u^D),
\end{equation*}
where 
\begin{align*}
  & h^*(u_K^k|u^D) = h(u_K^k) - h(u^D) - h'(u^D)\cdot(u_K^k-u^D) \\
  & \mbox{with }h(u_K^k) = \sum_{i=1}^n\big(u_{i,K}^k(\log u_{i,K}^k-1)+1\big)
	+ \int_0^{M_K^k}\log\frac{q(s)}{p(s)}ds
\end{align*}
is the relative entropy density.

\begin{thrm}[Existence of discrete solutions]\label{thm.ex}
Let hypotheses (H1)-(H4) and (A1) hold. Then there exists a solution 
$(u_K^k)_{K\in\T,\,k=0,\ldots,N_T}$ with $u_K^k=(u_{1,K}^k,\ldots,u_{n,K}^k)$ 
to scheme \eqref{sch.ic}-\eqref{2.psigma} satisfying
$$
  u_{i,K}^k\ge 0, \quad M_K^k = \sum_{i=1}^n u_{i,K}^k \leq M^* \quad\mbox{for }
	K\in\T,\ k=0,\ldots,N_T,
$$
where $M^* = \sup_{x \in \Omega} \lbrace M^D, M^0(x) \rbrace < 1$. Moreover, the discrete entropy dissipation inequality
\begin{equation}\label{2.edi}
  H(u_\M^{k}) + \Delta t\sum_{i=1}^n I_i(u_\M^{k}) 
	\le H(u_\M^{k-1}), \quad k=1,\ldots,N_T,
\end{equation}
holds with the entropy dissipation
\begin{equation}\label{2.ed}
  I_i(u_\M^{k}) = \sum_{\sigma\in\E }\tau_\sigma
	(p_\sigma^{k})^2\bigg(D_\sigma\bigg(\sqrt{\frac{u_i^{k}q(M^k)}{p(M^k)}}\bigg)\bigg)^2, \quad i=1,\ldots,n.
\end{equation}
\end{thrm}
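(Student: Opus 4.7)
The plan is to argue by induction on the time level $k$; at each step, given $u^{k-1}_\M$, one constructs $u^k_\M$ with the stated properties. Following the boundedness-by-entropy philosophy of \cite{BDPS10,Jue15}, the key idea is to look for $u^k_K$ in the form $u^k_K = u(w^k_K)$, where $w^k_K \in \R^n$ is a discrete entropy variable and $u : \R^n \to \dom$ is the inverse of the map $(\partial h/\partial u_i)_i$; by \cite[Lemma 3.3]{DMZ18} this is a $C^1$ diffeomorphism onto $\dom$, so the parametrisation automatically delivers the strict bounds $u^k_{i,K} > 0$ and $M^k_K < 1$ without any truncation or maximum principle.

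Existence would then be obtained by a topological degree argument on the system rewritten in the unknown $w = (w^k_{i,K})$, using the affine homotopy
\[
\frac{\m(K)}{\Delta t}(u_{i,K}(w) - u_{i,K}^{k-1}) + \lambda \sum_{\sigma\in\E_K} \F_{i,K,\sigma}(w) = 0, \quad \lambda \in [0,1],
\]
which for $\lambda = 0$ has the unique solution $u(w) = u^{k-1}$ (with nonzero degree) and for $\lambda = 1$ is the scheme. The a priori bound uniform in $\lambda$ comes from testing by $\Delta t\, w^k_{i,K}$ and summing: convexity of $h$ gives $\sum_K \m(K)(u_K(w^k) - u_K^{k-1})\cdot w^k_K \ge H(u^k_\M) - H(u^{k-1}_\M)$, the flux term is nonnegative (see the entropy computation below), and combined with the homogeneous Dirichlet condition $w^k_{i,\sigma} = 0$ on $\E_{{\rm ext}}^D$ plus a discrete Poincar\'e inequality this bounds $|w^k|_{1,2,\M}$, hence $\|w^k\|_\infty$ in finite dimension, allowing the ball $B_R \subset \R^{n\cdot\#\T}$ to be chosen large enough.

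The parametrisation through $w$ only yields $M^k_K < 1$ strictly, but the sharp bound $M^k_K \le M^*$ is obtained separately using (A1). Summing \eqref{sch1} over $i$ with $\alpha_i = 1$ produces a scalar scheme for $M^k$,
\[
\frac{\m(K)}{\Delta t}(M^k_K - M^{k-1}_K) - \sum_{\sigma\in\E_K}\tau_\sigma (p_\sigma^k)^2 D_{K,\sigma} G(M^k) = 0,
\]
where $G(M) := Mq(M)/p(M) = \int_0^M s^a/((1-s)^b p(s)^2)\,ds$ is strictly increasing by \eqref{1.q}. Testing by $((M^k_K - M^*)^+)_K$ and summing over $K$, the monotonicity of $G$ (together with $M^D \le M^*$) renders the diffusive part nonnegative, while the inductive hypothesis $M^{k-1}_K \le M^*$ handles the time part, forcing $(M^k_K - M^*)^+ = 0$.

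Finally, the discrete entropy inequality \eqref{2.edi} is a byproduct of the same test as in the existence step: after discrete integration by parts the flux contribution equals
\[
\Delta t \sum_{i=1}^n \sum_{\sigma\in\E} \tau_\sigma (p_\sigma^k)^2 D_{K,\sigma}\bigg(\frac{u_i^k q(M^k)}{p(M^k)}\bigg)\, D_{K,\sigma}\bigg(\log\frac{u_i^k q(M^k)}{p(M^k)}\bigg),
\]
and the elementary inequality $(a-b)(\log a - \log b) \ge 4(\sqrt a - \sqrt b)^2$ applied edge-by-edge converts this into (a multiple of) $\sum_i I_i(u^k_\M)$. The main difficulty is the interplay between the existence step and the bound on $M$: the entropy-variables machinery delivers a solution in $\dom$ with no quantitative distance from the saturation value $1$, whereas the later convergence analysis needs $M^k_K \le M^* < 1$ uniformly. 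Decoupling existence (by topological degree in $w$) from the sharp upper bound (by a discrete maximum principle on the summed equation, which is only available under (A1)) is the delicate structural point of the argument.
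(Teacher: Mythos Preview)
Your overall architecture matches the paper: induction in $k$, entropy variables $w$ with $u=u(w)\in\dom$ via \cite[Lemma~3.3]{DMZ18}, a topological degree argument, and a discrete maximum principle for $M$ using (A1). The entropy inequality derivation is also correct. However, the existence step as you describe it does not close.

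The gap is that the entropy estimate you obtain from testing by $\Delta t\,w_{i,K}^k$ gives only
\[
H(u(w^k)) + \Delta t\sum_i I_i(u(w^k)) \le H(u^{k-1}),
\]
and neither term on the left controls $\|w^k\|$. The production $I_i$ bounds $(p_\sigma^k)^2(D_\sigma\sqrt{v_i})^2$ with $v_i=u_iq(M)/p(M)=e^{w_i}\cdot\mathrm{const}$, not $(D_\sigma w_i)^2$; and $H(u(w))$ stays bounded while $w_{i,K}\to-\infty$ (since $u_i\log u_i\to 0$ as $u_i\to 0$). So no ball $B_R$ in the $w$-variables can be extracted from the entropy alone, and the degree argument has no a~priori confinement. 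A second, related problem is your endpoint $\lambda=0$: the equation $u(w_K)=u_K^{k-1}$ need not be solvable, because (H3) only guarantees $u_{i,K}^{k-1}\ge 0$, so $u^{k-1}_K$ may lie on $\partial\dom$ where the diffeomorphism $u(\cdot)$ does not reach.

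The paper fixes both issues by adding an $\eps$-regularisation: one looks for a fixed point of a map $F_\eps$ defined through the \emph{linear} problem
\[
\eps\Big(-\sum_{\sigma\in\E_K}\tau_\sigma D_{K,\sigma} w_i^\eps + \m(K)\,w_{i,K}^\eps\Big)
= -\Big(\tfrac{\m(K)}{\Delta t}(u_{i,K}(w)-u_{i,K}^{k-1}) + \sum_{\sigma\in\E_K}\F_{i,K,\sigma}(w)\Big),
\]
and the homotopy multiplies the right-hand side by $\rho\in[0,1]$. Testing by $w_{i,K}^\eps$ now produces the extra term $\eps\|w_{i,\M}^\eps\|_{1,2,\M}^2$, which (together with the entropy estimate) gives the needed $\eps$-dependent bound $\|w^\eps\|_{1,2,\M}\le (H(u^{k-1})/(\eps\Delta t))^{1/2}$ and makes the degree argument work. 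One then passes to the limit $\eps\to 0$: since $u^\eps_K\in\overline{\dom}$ this converges (up to subsequences), $\eps w^\eps\to 0$, and the maximum-principle estimate $\sum_K\m(K)([M_K^\eps-M^*]^+)^2\le C\sqrt{\eps}$ (obtained by testing the summed $\eps$-equation by $[M^\eps_K-M^*]^+$, exactly as you propose) yields $M_K\le M^*<1$ in the limit, which is what allows the flux $\F^\eps_{i,K,\sigma}$ to be passed to the limit. Your maximum-principle computation and the entropy inequality survive unchanged; what is missing is the regularisation that makes the existence step rigorous.
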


For the convergence result, we introduce a family $(\DD_\eta)_{\eta>0}$ of
admissible space-time discretizations of $Q_T$ indexed by the size 
$\eta=\max\{\Delta x,\Delta t\}$ of the mesh. We denote by
$(\M_\eta)_{\eta>0}$ the corresponding meshes of $\Omega$. For any $\eta>0$,
let $u_\eta:=u_{\DD_\eta}$ be the finite-volume solution constructed in
Theorem \ref{thm.ex} and set $\na^\eta:=\na^{\DD_\eta}$.

\begin{thrm}\label{thm.conv}
Let the hypotheses of Theorem \ref{thm.ex} hold. 
Let $(\DD_\eta)_{\eta>0}$
be a family of admissible discretizations satisfying \eqref{2.regmesh} uniformly
in $\eta$. Furthermore, let $(u_\eta)_{\eta>0}$ be a family of finite-volume
solutions to scheme \eqref{sch.ic}-\eqref{2.psigma}. Then there exists a function
$u=(u_1,\ldots,u_n)$ satisfying $u(x,t)\in\overline{\dom}$ (see \eqref{1.O})
such that
\begin{align*}
  u_{i,\eta}\to u_i &\quad\mbox{a.e. in }Q_T,\ i=1,\ldots,n, \\
	M_\eta = \sum_{i=1}^n u_{i,\eta} \to M = \sum_{i=1}^n u_i<1 
	&\quad\mbox{a.e. in }Q_T, \\
	\na^\eta\bigg(\frac{u_{i,\eta}q(M_\eta)}{p(M_\eta)}\bigg) \rightharpoonup
	\na\bigg(\frac{u_iq(M)}{p(M)}\bigg) &\quad\mbox{weakly in }L^2(Q_T).
\end{align*}
The limit function satisfies the boundary condition in the sense
$$
  \frac{u_iq(M)}{p(M)} - \frac{u_i^Dq(M^D)}{p(M^D)}\in L^2(0,T;H_D^1(\Omega)),
$$
with $H^1_D(\Omega) := \{ v \in H^1(\Omega) : v=0$ on $\Gamma^D \}$ 
and it is a weak solution to
\eqref{1.eq}-\eqref{1.bc} in the sense
$$
  \sum_{i=1}^n\bigg(\int_0^T\int_\Omega u_i\pa_t\phi_i dxdt 
	+ \int_\Omega u_i^0(x)\phi(x,0)dx
	\bigg) = \sum_{i=1}^n\int_0^T\int_\Omega p(M)^2\na
	\bigg(\frac{u_iq(M)}{p(M)}\bigg)
	\cdot\na\phi_i\, dxdt, 
$$
for all $\phi_i\in C_0^\infty(\Omega \times [0,T))$. 
\end{thrm}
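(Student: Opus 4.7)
The plan is to combine the a priori estimates from Section \ref{sec.apriori} with a discrete Aubin--Lions argument from \cite{ACM17} to extract a converging subsequence, and then pass to the limit in the discrete weak formulation. From Theorem \ref{thm.ex} we already have the uniform pointwise bound $0\le u_{i,\eta}\le M_\eta\le M^*<1$. Iterating the discrete entropy inequality \eqref{2.edi} yields $\Delta t\sum_{k=1}^{N_T}\sum_{i=1}^n I_i(u_\M^k)\le H(u_\M^0)$, bounded independently of $\eta$ thanks to hypothesis (H3); applying Lemma \ref{lem.lower} then produces uniform $L^2(Q_T)$-bounds on $\na^\eta\sqrt{u_{i,\eta}}$ and on a regularized discrete gradient of $M_\eta$. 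Since $p$ and $q/p$ are smooth on $[0,M^*]$ with $M^*<1$, a discrete chain-rule-type estimate in the spirit of \cite{CCGJ19,JuZu19} upgrades these bounds to a uniform $L^2(Q_T)$-bound on $\na^\eta(u_{i,\eta}q(M_\eta)/p(M_\eta))$. Combined with the discrete time-translation estimate derived in Section \ref{sec.apriori}, these bounds place the family $(u_{i,\eta})$ in the hypotheses of the discrete compactness theorem of \cite{ACM17}.

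Up to a subsequence, one then obtains $u_{i,\eta}\to u_i$ strongly in $L^2(Q_T)$ and a.e.\ in $Q_T$, hence $M_\eta\to M$ a.e.\ with $M\le M^*<1$. Therefore $p(M_\eta)$, $q(M_\eta)$, $q(M_\eta)/p(M_\eta)$, and the edge average $(p_\sigma^k)^2$ of \eqref{2.psigma}, viewed after reconstruction on the dual diamonds, converge strongly in every $L^r(Q_T)$, $1\le r<\infty$, by dominated convergence. The $L^2$-bounded sequence $\na^\eta(u_{i,\eta}q(M_\eta)/p(M_\eta))$ admits a weak limit $\xi_i$; testing against $\na\psi$ for $\psi\in C_c^\infty(\Omega\times(0,T))$ and exploiting the mesh regularity \eqref{2.regmesh} identifies $\xi_i=\na(u_iq(M)/p(M))$. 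An analogous trace-type argument on the Dirichlet edges, together with $u_{i,\sigma}^D=u_i^D$, gives the claimed boundary regularity $u_iq(M)/p(M)-u_i^Dq(M^D)/p(M^D)\in L^2(0,T;H^1_D(\Omega))$.

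It remains to pass to the limit in the scheme. Given $\phi_i\in C_c^\infty(\Omega\times[0,T))$, multiply \eqref{sch1} by $\phi_i(x_K,t_{k-1})$ and sum over $K\in\T$ and $k=1,\dots,N_T$. By dominated convergence and a discrete integration by parts in time, the time-derivative sum converges to $-\int_0^T\int_\Omega u_i\pa_t\phi_i\,dxdt-\int_\Omega u_i^0\phi_i(\cdot,0)\,dx$. After a discrete integration by parts in space and reconstruction on the dual mesh, the flux contribution becomes an integral over $Q_T$ of the product of the strongly convergent factor $(p_\sigma^k)^2$, the weakly convergent factor $\na^\eta(u_{i,\eta}q(M_\eta)/p(M_\eta))$, and the consistent approximation $\na^\eta\phi_i\to\na\phi_i$ valid in $L^\infty(Q_T)$ for smooth $\phi_i$ under \eqref{2.regmesh}; the weak--strong pairing then yields $\int_0^T\int_\Omega p(M)^2\na(u_iq(M)/p(M))\cdot\na\phi_i\,dxdt$. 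The main obstacle is precisely this strong $L^2$-convergence of the arithmetic edge mean $(p_\sigma^k)^2$ to $p(M)^2$: because $p$ is evaluated separately in the two neighboring cells, one must control the jump $|D_{K,\sigma}p(M)^2|$ in $L^2$, which is obtained from the Lipschitz character of $p^2$ on $[0,M^*]$, the uniform gradient bound on $M_\eta$ coming from Step~1, and the mesh regularity \eqref{2.regmesh}; once this is in place, replacing the edge mean by the cell-reconstructed value $p(M_\eta)^2$ in the limit closes the argument.
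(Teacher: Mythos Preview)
Your overall strategy—entropy estimates, Lemma~\ref{lem.lower}, the time estimate, compactness via \cite{ACM17}, and passage to the limit in the weak form—matches the paper. The first two paragraphs are essentially what Sections~\ref{sec.apriori}--\ref{sec.convsol} do. The third paragraph, however, contains two genuine inaccuracies in the treatment of the diffusion term.

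First, the assertion that $\na^\eta\phi_i\to\na\phi_i$ in $L^\infty(Q_T)$ is false for the two-point discrete gradient defined on the diamond mesh. On $T_{K,\sigma}$ one has, using \eqref{2.para}, $\na^\eta\phi = \frac{2}{\dist_\sigma}D_{K,\sigma}\phi\,\nu_{K,\sigma}\approx 2(\na\phi\cdot\nu_{K,\sigma})\nu_{K,\sigma}$, which is twice the normal projection of $\na\phi$, not $\na\phi$ itself. This object converges to $\na\phi$ only weakly. What \emph{is} true, and what the paper uses (Section~\ref{sec.convsch}, estimate of $|F_{20}^\eta-F_{21}^\eta|$), is the scalar consistency
\[
\bigg|\frac{D_{K,\sigma}\phi}{\dist_\sigma}-\frac{1}{\m(T_{K,\sigma})}\int_{T_{K,\sigma}}\na\phi\cdot\nu_{K,\sigma}\,dx\bigg|\le C_{\rm cons}\,\eta,
\]
taken from \cite{CLP03}; this is enough because the flux only involves the normal component.

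Second, you invoke a ``uniform gradient bound on $M_\eta$'' to control $|D_{K,\sigma}p(M)^2|$. Lemma~\ref{lem.lower} gives only the \emph{weighted} bound $\sum_\sigma\tau_\sigma (M_\sigma^k)^{a-1}(1-M_\sigma^k)^{-1-b-\kappa}(D_\sigma M^k)^2\le C$, whose weight vanishes as $M_\sigma^k\to 0$ when $a>1$; an unweighted $L^2$ bound on $\na^\eta M_\eta$ is not available. The paper avoids this by the decomposition $F_2^\eta=F_{21}^\eta+F_{22}^\eta$ in Section~\ref{sec.convsch}: in $F_{21}^\eta$ the edge mean $(p_\sigma^k)^2$ is replaced by the cell value $p(M_K^k)^2$ (so one can use the cell-wise strong limit $p(M_\eta)^2\to p(M)^2$), and the remainder $F_{22}^\eta$, which carries the jump $(p_\sigma^k)^2-p(M_K^k)^2$, is shown to be $O(\eta)$ by pairing a Taylor expansion $p(M_L)^2-p(M_K)^2=2p'(\widetilde M_\sigma)p(\widetilde M_\sigma)D_{K,\sigma}M$ with the square root of the discrete gradient via Cauchy--Schwarz and then absorbing the resulting weighted $(D_\sigma M)^2$ term \emph{directly} into the entropy production bound of Lemma~\ref{lem.lower}. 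The algebra there is arranged precisely so that the degenerate factor $(M_\sigma)^{a-1}$ reappears and cancels against the behaviour $q(M)\sim M^a$ near $M=0$. Your proposed route through strong $L^2$-convergence of the edge mean could in principle be made to work, but it would require a separate argument (translation estimates on $u_{i,\eta}q(M_\eta)/p(M_\eta)$ from Lemma~\ref{lem.grad}, then continuity of the inverse map), not the Lipschitz-plus-gradient step you sketch.
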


We also need the assumption $\alpha_i= 1$ for $i=1,\ldots,n$ for the proof of Theorem~\ref{thm.conv}. Indeed, due to the lack of chain rule at the discrete level, 
it is not clear how to identify the weak limit of the term $p(M_\eta)^2 \nabla^\eta(u_{i,\eta} q(M_\eta)/p(M_\eta))$. Another difficulty comes from the degeneracy of $p$ when $M=1$, which prevents the proof of a uniform bound on $\nabla^\eta(u_{i,\eta} q(M_\eta)/p(M_\eta))$ from the entropy inequality~\eqref{2.edi}. Our strategy relies on the uniform upper bound satisfied 
by $M_\eta$ obtained in Theorem~\ref{thm.ex}. Thanks to this bound, the monotonicity of $p$, and the inequality~\eqref{2.edi}, we can establish a uniform 
bound on the $L^2$ norm of $\nabla^\eta(u_{i,\eta} q(M_\eta)/p(M_\eta))$ and identify its weak limit. 
The numerical experiments in Section~\ref{sec.exp.num} seem to indicate that the assumption $\alpha_i=1$ is purely technical and that the scheme still converges in 
the case of different diffusivities.


\section{Existence of finite-volume solutions}\label{sec.ex}

In this section, we prove Theorem \ref{thm.ex}. We proceed by induction.
For $k=0$, we have $u^0\in\overline{\dom}$ with $u^0_i \geq 0$ for $K \in \T$, $i=1,\ldots,n$ by assumption and $M^0 \leq M^* = \sup_{x \in \Omega} \{M^D, M^0(x) \}$ by construction. Assume that there exists a solution
$u_\M^{k-1}$ for some $k\in\{1,\ldots,N_T\}$ such that
$$
  u_K^{k-1}\ge 0, \quad M_K^{k-1} = \sum_{i=1}^n u_{i,K}^{k-1} \leq M^*
	\quad\mbox{for }K\in\T.
$$
The construction of a solution $u_\M^k$ is divided into several steps.

\medskip
\noindent\textbf{Step 1. Definition of a linearized problem.} We introduce the set
\begin{align*}
  Z = \big\{&w_\M=(w_{1,\M},\ldots,w_{n,\M}): w_{i,\sigma}=0 \mbox{ for }
	\sigma\in\E_{\rm ext}^D, \\
	&\|w_{i,\M}\|_{1,2,\M}<\infty\mbox{ for }i=1,\ldots,n\big\}.
\end{align*}
Let $\eps>0$. We define the mapping $F_\eps:Z\to\R^{\theta n}$
by $F_\eps(w_\M)=w_\M^\eps$, with $\theta=\#\T + \#\E^D$, where 
$w_\M^\eps = (w_{1,\M}^\eps,\ldots,w_{n,\M}^\eps)$ is the solution to the 
linear problem
\begin{equation}\label{3.lin}
  \eps \left(-\sum_{\sigma\in\E_K}\tau_\sigma D_{K,\sigma} w_i^\eps + \m(K) w^\eps_{i,K} \right)
  = -\bigg(\frac{\m(K)}{\Delta t}(u_{i,K}-u_{i,K}^{k-1})
	+ \sum_{\sigma\in\E_K}\F_{i,K,\sigma} \bigg),
\end{equation}
for $K\in\T$, $i=1,\ldots,n$ with
\begin{equation}\label{3.bc}
  w_{i,\sigma}^\eps = 0\quad\mbox{for }\sigma\in\E_{\rm ext}^D,\ i=1,\ldots,n.
\end{equation}
Here, $u_{i,K}$ is a function of $w_{i,K}$, defined by
\begin{equation}\label{3.w}
  w_{i,K} = \log\frac{u_{i,K}q(M_K)}{p(M_K)}
	- \log\frac{u_i^Dq(M^D)}{p(M^D)}
	\quad i=1,\ldots,n,
\end{equation}
and $\F_{i,K,\sigma}$ is defined in \eqref{sch2}. Note that $\F_{i,K,\sigma}$ 
depends on $w_\M$ via $u_\M$ and $M$.
It is shown in \cite[Lemma 3.3]{DMZ18} that the mapping $\dom\to\R^n$,
$u_K\mapsto w_K$ is invertible, 
so the function $u_K=u(w_K)$ is well-defined and $u_K\in\dom$ 
(recall definition \eqref{1.O} of $\dom$). The proof in 
\cite[Lemma 3.3]{DMZ18} shows that $M_K\in(0,1)$ such that $\F_{i,K,\sigma}$
is well-defined too. Since $M_K=\sum_{i=1}^n u_{i,K}$, we infer that
$0\le u_{i,K}<1$.
Definitions \eqref{2.vKsigma} and \eqref{2.Dsigma} ensure that
$D_{K,\sigma} w_i^\eps =0$ for all $\sigma\in\E_{{\rm ext},K}^N$.
The existence of a unique solution $w_K^\eps$ to the linear scheme
\eqref{3.lin}-\eqref{3.bc} is now a consequence of \cite[Lemma 9.2]{EGH00}.


\medskip
\noindent\textbf{Step 2. Continuity of $F_\eps$.} We fix $i\in\{1,\ldots,n\}$. We derive first an a priori estimate for $w_{i,\M}^\eps$.
Multiplying \eqref{3.lin} by $w_{i,K}^\eps$,
summing over $K\in\T$ and using the symmetry of $\tau_\sigma$ with respect to 
$\sigma=K|L$, we arrive at
\begin{align}
  \eps\sum_{\sigma\in\E }\tau_\sigma  (D_{\sigma} w_i^\eps)^2+\eps\sum_{K\in\T} \m(K) |w^\eps_{i,K}|^2 
	&= -\sum_{K \in \T}\frac{\m(K)}{\Delta t}(u_{i,K}-u_{i,k}^{k-1}) \, w^{\eps}_{i,K} - \sum_{\substack{\sigma\in\E \\ K=K_\sigma}}\F_{i,K,\sigma}w_{i,K}^\eps 
	  \nonumber \\
	&=: J_1 + J_2, \label{3.aux1}
\end{align}
where in the term $J_2$ the sum is over all edges $\sigma\in\E$, and to each given $\sigma$ 
we associate the cell $K=K_\sigma$.
For the left-hand side, we use the definition~\eqref{2.norm} of the discrete $H^1(\Omega)$ norm
$$
  \eps \sum_{\sigma\in\E }\tau_\sigma  (D_{\sigma} w_i^\eps)^2+\eps\sum_{K\in\T} \m(K) |w^\eps_{i,K}|^2 = \eps\|w_{i,\M}^\eps\|_{1,2,\M}^2.
$$
By the Cauchy-Schwarz inequality and definition \eqref{sch2} of
$\F_{i,K,\sigma}$, we find that
\begin{align*}
  |J_1| &\le \frac{1}{\Delta t}\bigg(\sum_{K\in\T}\m(K)(u_{i,K}-u_{i,K}^{k-1})^2
	\bigg)^{1/2}\bigg(\sum_{K\in\T}\m(K)(w_{i,K}^\eps)^2\bigg)^{1/2} \\
	&\le \frac{1}{\Delta t}\|u_{i,\M}-u_{i,\M}^{k-1}\|_{0,2,\M} \, \|w_{i,\M}^\eps\|_{1,2,\M}, \\
	|J_2| &\le \sum_{\substack{\sigma\in\E \\ K=K_\sigma}}\tau_\sigma
	p_\sigma^2 D_\sigma\bigg(\frac{u_iq(M)}{p(M)}\bigg)D_\sigma w_i^\eps \\
	&\le \bigg(\sum_{\sigma\in\E}\tau_\sigma
	\left(p_\sigma^2\right)^2\bigg( D_\sigma\bigg(\frac{u_iq(M)}{p(M)}\bigg)\bigg)^2\bigg)^{1/2}
	\bigg(\sum_{\sigma\in\E}\tau_\sigma(D_\sigma w_i^\eps)^2\bigg)^{1/2}.
\end{align*}
Since $M_K\in(0,1)$ for all $K\in\T$, $u_{i,K} \, q(M_K)/p(M_K)$ is bounded. Moreover,
$p_\sigma\le p(0)$ as $p$ is decreasing. Hence,
there exists a constant $C(M)>0$ which is independent of $w_{i,\M}^\eps$ such that
$|J_2|\le C(M)\|w_{i,\M}^\eps\|_{1,2,\M}$. This constant does not depend on 
$u_{i,K}\in[0,1)$. Inserting these estimations into \eqref{3.aux1} yields
\begin{equation}\label{3.aux2}
  \sqrt{\eps}\|w^{\eps}_{i,\M}\|_{1,2,\M} \le C(M),
\end{equation}
where $C(M)>0$ is independent of $w_{i,\M}^\eps$.

We turn to the proof of the continuity of $F_\eps$. Let $(w_{\M}^m)_{m\in\N}\in Z$
be such that $w_\M^m\to w_\M$ as $m\to\infty$. Estimate \eqref{3.aux2} shows that
$w_\M^{\eps,m} := F_\eps(w_\M^m)$ is bounded uniformly in $m\in\N$. Thus, there
exists a subsequence of $(w_\M^{\eps,m})$ which is not relabeled such that
$w_\M^{\eps,m}\to w_\M^\eps$ as $m\to\infty$. Passing to the limit $m\to\infty$
in scheme \eqref{3.lin}-\eqref{3.bc} and taking into account the continuity of
the nonlinear functions, we see that $w_{i,\M}^\eps$ is a solution to
\eqref{3.lin}-\eqref{3.bc} and $w_\M^\eps = F_\eps(w_\M)$. Because of the
uniqueness of the limit function, the whole sequence converges, which
proves the continuity.


\medskip
\noindent\textbf{Step 3. Existence of a fixed point.} We claim that the map $F_\eps$ admits a fixed point. We use a topological degree
argument \cite{Dei85}, i.e., we prove that $\delta(I-F_\eps,Z_R,0) = 1$, where
$\delta$ is the Brouwer topological degree and
$$
  Z_R = \{w_\M\in Z: \|w_{i,\M}\|_{1,2,\M} < R\quad\mbox{for }i=1,\ldots,n\}.
$$
Since $\delta$ is invariant by homotopy, it is sufficient to prove that any solution
$(w_\M^\eps,\rho)\in \overline{Z}_R\times[0,1]$ to the fixed-point equation
$w_\M^\eps = \rho F_\eps(w_\M^\eps)$
satisfies $(w_\M^\eps,\rho)\not\in\pa Z_R\times[0,1]$ for sufficiently large values
of $R>0$. Let $(w_\M^\eps,\rho)$ be a fixed point and $\rho\neq 0$, the case $\rho=0$ 
being clear.
Then $w_{i,\M}^\eps$ solves
\begin{equation}\label{3.lin2}
  \eps \left(-\sum_{\sigma\in\E_K}\tau_\sigma D_{K,\sigma} w_i^\eps + \m(K) w^\eps_{i,K} \right)
  = -\rho\bigg(\frac{\m(K)}{\Delta t}(u^\eps_{i,K}-u_{i,K}^{k-1})
	+ \sum_{\sigma\in\E_K}\F^\eps_{i,K,\sigma} \bigg),
\end{equation}
where $\F_{i,K,\sigma}^\eps$ is defined as in \eqref{sch2} with 
$u_\M$ replaced by $u_\M^\eps$ which is related to $w^\eps_\M$ by \eqref{3.w}.

The following discrete entropy inequality is the key argument.

\begin{lem}[Discrete entropy inequality]\label{lem.edi}\sloppy
Let the assumptions of Theorem~\ref{thm.ex} hold. Then for any $\rho\in(0,1]$ and $\eps\in(0,1)$,
\begin{align*}
  & \rho H(u_\M^\eps) + \eps\Delta t\sum_{i=1}^n
	|| w_{i,\M}^\eps ||_{1,2,\M}^2
	+ \rho\Delta t\sum_{i=1}^n I_i(u_\M^\eps) \le \rho H(u_\M^{k-1}), \\
  & \mbox{where }I_i(u_\M^\eps) = \sum_{\sigma\in\E}
	\tau_\sigma (p_\sigma^\eps)^2\bigg(D_\sigma\bigg(
	\sqrt{\frac{u_i^\eps q(M^\eps)}{p(M^\eps)}}\bigg)\bigg)^2, \quad i=1,\ldots,n,
\end{align*}
with obvious notations for $(p_\sigma^\eps)^2$ and $M^\eps$.
\end{lem}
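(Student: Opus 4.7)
The plan is to use the entropy variables $w_{i,K}^\eps$ themselves as test functions in the regularized scheme \eqref{3.lin2}, which mimics exactly the continuous calculation producing \eqref{1.edi}. Concretely, I would multiply \eqref{3.lin2} by $\Delta t\, w_{i,K}^\eps$ and sum over $K\in\T$ and $i=1,\ldots,n$. The virtue of this test function is built into the definition \eqref{3.w}: the logarithmic structure of $w_{i,K}^\eps$ turns the dissipative flux contribution into a sum of products of the form $(f_{i,K,\sigma}^\eps-f_{i,K}^\eps)(\log f_{i,K,\sigma}^\eps-\log f_{i,K}^\eps)$ with $f_{i,K}^\eps=u_{i,K}^\eps q(M_K^\eps)/p(M_K^\eps)$, which is the discrete analog of what produces the square-root gradient structure in $I_i$.

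For the left-hand side, discrete integration by parts, combined with the Dirichlet condition $w_{i,\sigma}^\eps=0$ on $\E_{\rm ext}^D$ and the fact that the convention \eqref{2.vKsigma}--\eqref{2.Dsigma} forces $D_{K,\sigma}w_i^\eps=0$ on $\E_{\rm ext}^N$, converts $\sum_K\sum_{\sigma\in\E_K}(-\tau_\sigma D_{K,\sigma}w_i^\eps)w_{i,K}^\eps$ into $\sum_\sigma\tau_\sigma(D_\sigma w_i^\eps)^2$. Adding the zeroth-order contribution $\sum_K\m(K)(w_{i,K}^\eps)^2$ and summing over $i$ yields exactly $\eps\Delta t\sum_i\|w_{i,\M}^\eps\|_{1,2,\M}^2$.

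On the right-hand side, the time-discrete term is controlled via the convexity of the relative entropy density $h^*(\,\cdot\,|u^D)$. Since the map $u\mapsto w=(h^*)'(u)$ is a bijection $\dom\to\R^n$ by Lemma 3.3 of \cite{DMZ18}, $h^*$ is (strictly) convex, so the supporting hyperplane inequality gives, for each $K$,
\[
  h^*(u_K^{k-1}|u^D)-h^*(u_K^\eps|u^D)\ge -\sum_{i=1}^n w_{i,K}^\eps(u_{i,K}^\eps-u_{i,K}^{k-1}).
\]
Multiplying by $\rho\,\m(K)$ and summing gives $-\rho\sum_{i,K}\m(K)(u_{i,K}^\eps-u_{i,K}^{k-1})w_{i,K}^\eps\le \rho H(u_\M^{k-1})-\rho H(u_\M^\eps)$. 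The flux contribution is handled by a second discrete integration by parts, using the conservativity $\F_{i,K,\sigma}^\eps+\F_{i,L,\sigma}^\eps=0$ for $\sigma=K|L$, which produces
\[
  -\sum_K\sum_{\sigma\in\E_K}\F_{i,K,\sigma}^\eps w_{i,K}^\eps = -\sum_{\sigma\in\E}\tau_\sigma \alpha_i (p_\sigma^\eps)^2\, D_{K,\sigma}f_i^\eps\cdot D_{K,\sigma}w_i^\eps,
\]
where by construction $D_{K,\sigma}w_i^\eps=\log f_{i,K,\sigma}^\eps-\log f_{i,K}^\eps$ (on Dirichlet edges $f_{i,\sigma}^\eps$ reduces to $u_i^Dq(M^D)/p(M^D)$ via \eqref{3.w}). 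Applying the elementary logarithmic-mean inequality $(a-b)(\log a-\log b)\ge 4(\sqrt{a}-\sqrt{b})^2$ valid for all $a,b>0$ edgewise bounds this from above by $-4\rho\Delta t\sum_i\alpha_i I_i(u_\M^\eps)\le -\rho\Delta t\sum_i I_i(u_\M^\eps)$ under assumption (A1).

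Summing the three pieces and rearranging yields the announced inequality. I expect the main obstacle to be the careful bookkeeping of boundary contributions in the two summations by parts (interior, Dirichlet, and Neumann edges have to be treated separately, with the Dirichlet terms producing $D_\sigma w_i^\eps = -w_{i,K}^\eps$), together with the verification that \eqref{3.w} indeed expresses $w$ as the gradient of the relative entropy density on $\dom$, so the convexity step is legitimate and the $u_{i,K}^\eps$ appearing in the convexity inequality lie in $\dom$ (which is guaranteed by the invertibility statement recalled in Step~1). Once the log-mean inequality is invoked, the remaining steps are purely algebraic.
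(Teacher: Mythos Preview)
Your proposal is correct and follows essentially the same route as the paper: multiply \eqref{3.lin2} by $\Delta t\,w_{i,K}^\eps$, sum over $i$ and $K$, use discrete integration by parts to produce $\eps\Delta t\sum_i\|w_{i,\M}^\eps\|_{1,2,\M}^2$ and to rewrite the flux term, then apply the convexity of the (relative) entropy density for the time increment and the inequality $(a-b)(\log a-\log b)\ge 4(\sqrt a-\sqrt b)^2$ for the dissipation. The only cosmetic difference is that you invoke convexity of $h^*(\cdot\,|u^D)$ abstractly via $w=(h^*)'(u)$, whereas the paper expands $w_{i,K}^\eps$ explicitly and appeals to the convexity of $u\mapsto u(\log u-1)+1$ (together, implicitly, with that of $M\mapsto\int_0^M\log(q/p)\,ds$).
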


\begin{proof}
We multiply \eqref{3.lin2} by $\Delta t w_{i,K}^\eps$ and sum over $i=1,\ldots,n$
and $K\in\T$. This gives
\begin{align*}
  & \eps \Delta t \sum_{i=1}^n\Bigg(-\sum_{\substack{\sigma\in\E \\ K=K_\sigma}}
	\tau_\sigma w_{i,K}^\eps D_{K,\sigma} w_i^\eps + \sum_{K\in\T} \m(K) |w^\eps_{i,K}|^2 \Bigg)
	+ J_3 + J_4=0, \quad\mbox{where} \\
	& J_3 = \rho\sum_{i=1}^n\sum_{K\in\T}\m(K)(u_{i,K}^\eps-u_{i,K}^{k-1})w_{i,K}^\eps, \\
	& J_4 = \rho\Delta t\sum_{i=1}^n\sum_{\substack{\sigma\in\E \\ K=K_\sigma}}
	\F_{i,K,\sigma}^\eps w_{i,K}^\eps.
\end{align*}
By the symmetry of $\tau_\sigma$ with respect to $\sigma=K|L$, the first term is written as
$$
  \eps \Delta t \sum_{i=1}^n\Bigg(-\sum_{\substack{\sigma\in\E \\ K=K_\sigma}}
	\tau_\sigma w_{i,K}^\eps D_{K,\sigma} w_i^\eps + \sum_{K\in\T} \m(K)
	|w^\eps_{i,K}|^2 \Bigg) 
	= \eps\Delta t\sum_{i=1}^n\| w_{i,\M}^\eps \|_{1,2,\M}^2.
$$
Inserting definition \eqref{3.w} of $w_{i,K}^\eps$ and using the convexity
of $u\mapsto u(\log u-1)+1$, we obtain
\begin{align*}
  J_3 &= \rho\sum_{i=1}^n\sum_{K\in\T}\m(K)(u_{i,K}^\eps-u_{i,K}^{k-1})
	\bigg(\log u_{i,K}^\eps + \log\frac{q(M_K^\eps)}{p(M_K^\eps)}\bigg) \\
	&\phantom{xx}{}- \rho\sum_{i=1}^n\sum_{K\in\T}\m(K)(u_{i,K}^\eps-u_{i,K}^{k-1})
	\bigg(\log u_{i}^D + \log\frac{q(M^D)}{p(M^D)}\bigg) \\
  &\ge \rho\sum_{K\in\T}\m(K)\big(h(u_K^\eps)-h(u_K^{k-1})\big)
	- \rho\sum_{i=1}^n\m(K)(u_{i,K}^\eps-u_{i,K}^{k-1})\frac{\pa h}{\pa u_i}(u^D) \\
	&= \rho\sum_{K\in\T}\m(K)\big(h(u_K^\eps) - (u_K^\eps-u^D)\cdot h'(u^D)\big) \\
	&\phantom{xx}{}
	- \rho\sum_{K\in\T}\m(K)\big(h(u_K^{k-1}) - (u_K^{k-1}-u^D)\cdot h'(u^D)\big) \\
	&= \rho\sum_{K\in\T}\m(K)\big(h^*(u_K^\eps|u^D) - h^*(u_K^{k-1}|u^D)\big)
	= \rho\big(H(u_\M^\eps)-H(u_\M^{k-1})\big).
\end{align*}
We abbreviate $v_{i,K}^\eps := u_{i,K}^\eps q(M_K^\eps)/p(M_K^\eps)$. Then
\begin{align*}
  J_4 &= -\rho\Delta t\sum_{i=1}^n\sum_{\substack{\sigma\in\E \\ K=K_\sigma}}
	\F_{i,K,\sigma}^\eps D_{K,\sigma}(w_i^\eps) \\
	&= \rho\Delta t\sum_{i=1}^n \sum_{\substack{\sigma\in\E \\ K=K_\sigma}}
	\tau_\sigma (p_\sigma^\eps)^2(v^\eps_{i,K,\sigma}-v_{i,K}^\eps)
	(\log v_{i,K,\sigma}^\eps - \log v_{i,K}^\eps).
\end{align*}
The elementary inequality $(x-y)(\log x-\log y)\ge 4(\sqrt{x}-\sqrt{y})^2$ 
for any $x$, $y>0$ implies that
$$
  J_4 \ge 4\rho\Delta t\sum_{i=1}^n \sum_{\sigma\in\E }
	\tau_\sigma (p_\sigma^\eps)^2\bigg(D_\sigma\bigg(\sqrt{
	\frac{u_i^\eps q(M^\eps)}{p(M^\eps)}}\bigg)\bigg)^2.
$$
Putting all the estimations together completes the proof.
\end{proof}

We proceed with the topological degree argument. The previous lemma implies that
$$
  \eps\Delta t\sum_{i=1}^n
	|| w_{i,\M}^\eps ||_{1,2,\M}^2
	\le \rho H(u_\M^{k-1}) \le H(u_\M^{k-1}).
$$
Then, if we define
\begin{equation*}
  R := \left(\frac{H(u_\M^{k-1})}{\eps\Delta t}\right)^{1/2}+1,
\end{equation*}
we conclude that $w_\M^\eps\not\in\pa Z_R$ and $\delta(I-F_\eps,Z_R,0)=1$.
Thus, $F_\eps$ admits a fixed point.


\medskip
\noindent\textbf{Step 4. Limit $\eps\to 0$.} We recall that $u_\M^\eps\in\overline\dom$. Thus, up to a subsequence,
$u_\M^\eps\to u_\M\in\overline\dom$ as $\eps\to 0$. We deduce from
\eqref{3.aux2} that there exists a subsequence (not relabeled) such that
$\eps w_{i,K}^\eps\to 0$ for any $K\in\T$ and $i=1,\ldots,n$. In order to
pass to the limit in the fluxes $\F_{i,K,\sigma}^\eps$, we need to show that
$M_K=\sum_{i=1}^n u_{i,K}<1$ for any $K\in\T$. To this end, we establish the following result:

\begin{lem}[$L^{2}$ estimate]
\label{lem.borne.sup}
Let the assumptions of Theorem \ref{thm.ex} hold. Then for all $\eps > 0$, 
there exists a constant $C > 0$ depending on $H(u^{k-1}_\M)$, $\Omega$, $\Delta t$, the mesh 
$\T$, and $M^* = \sup_{x \in \Omega} \{ M^D, M^0(x) \}$ such that
\begin{align}
\label{in.borne.sup}
\sum_{K \in \T} \m(K) \left([M^\eps_K -M^*]^+\right)^2 \leq C \, \sqrt{\eps},
\end{align}
where $z^+ = \max\{z,0\}$.
\end{lem}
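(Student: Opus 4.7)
The plan is to derive a scalar equation for the discrete total biomass $M^\eps_K := \sum_{i=1}^n u^\eps_{i,K}$ and then test it against $v_K := [M^\eps_K - M^*]^+$. Assumption (A1) is critical: summing the fixed-point equation \eqref{3.lin2} (with $\rho = 1$) over $i = 1,\ldots, n$ and setting $W^\eps := \sum_i w^\eps_i$, $\phi(M) := Mq(M)/p(M) = \int_0^M s^a (1-s)^{-b} p(s)^{-2}\, ds$, produces the closed scalar identity
\begin{equation*}
\frac{\m(K)}{\Delta t}(M^\eps_K - M^{k-1}_K) - \sum_{\sigma \in \E_K} \tau_\sigma (p_\sigma^\eps)^2 D_{K,\sigma}\phi(M^\eps) + \eps\biggl( -\sum_{\sigma \in \E_K} \tau_\sigma D_{K,\sigma} W^\eps + \m(K) W^\eps_K \biggr) = 0.
\end{equation*}
By \eqref{1.q}, $\phi$ is strictly increasing on $[0,1)$. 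Since $M^D \le M^*$ and $w^\eps_{i,\sigma} = 0$ on Dirichlet edges, both $v_\sigma = 0$ and $W^\eps_\sigma = 0$ there; on Neumann edges the discrete gradients vanish. This makes discrete integration by parts clean with no extra boundary contribution.

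I then multiply by $\Delta t\, v_K$ and sum over $K \in \T$, and treat the three contributions separately. For the discrete time derivative, the convex inequality $(a-b)a^+ \ge \tfrac{1}{2}\bigl((a^+)^2 - (b^+)^2\bigr)$ together with the induction hypothesis $M^{k-1}_K \le M^*$ (so $[M^{k-1}_K - M^*]^+ = 0$) yields
\begin{equation*}
\sum_{K \in \T}\m(K)(M^\eps_K - M^{k-1}_K)\, v_K \ge \tfrac{1}{2}\sum_{K \in \T}\m(K)\, v_K^2.
\end{equation*}
For the flux contribution, discrete integration by parts rewrites it as
\begin{equation*}
\Delta t \sum_{\sigma \in \E}\tau_\sigma (p_\sigma^\eps)^2\, D_{K,\sigma}\phi(M^\eps)\, D_{K,\sigma} v \ge 0,
\end{equation*}
which is nonnegative because $\phi(\cdot)$ and $[\,\cdot - M^*]^+$ are both nondecreasing; this term can therefore be dropped. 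For the $\eps$-regularization, the same integration by parts followed by Cauchy--Schwarz yields the upper bound $\eps\Delta t\, \|W^\eps_\M\|_{1,2,\M}\,\|v_\M\|_{1,2,\M}$.

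It remains to control these two norms. The discrete entropy inequality of Lemma~\ref{lem.edi} applied with $\rho = 1$ gives $\eps\Delta t \sum_i \|w^\eps_{i,\M}\|_{1,2,\M}^2 \le H(u_\M^{k-1})$, so $\|W^\eps_\M\|_{1,2,\M} \le n\sqrt{H(u^{k-1}_\M)/(\eps\Delta t)}$. On the other hand, $u^\eps_K \in \dom$ forces $M^\eps_K \in [0,1)$, so $v_K \in [0, 1-M^*]$ and $|D_{K,\sigma} v| \le 1-M^*$, whence $\|v_\M\|_{1,2,\M}$ is bounded by a constant depending only on $\Omega$, $\T$, and $M^*$. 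Collecting everything,
\begin{equation*}
\tfrac{1}{2}\sum_{K \in \T}\m(K)\,\bigl([M^\eps_K - M^*]^+\bigr)^2 \le C \sqrt{\eps},
\end{equation*}
which is exactly \eqref{in.borne.sup}.

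The main obstacle, as I see it, is precisely the reliance on (A1): without equal diffusivities the sum over species would not collapse into a single equation for $M^\eps$ driven by the scalar monotone function $\phi$, and the flux term could not be discarded on sign grounds. Everything else is bookkeeping: tracking the vanishing boundary values of both $v$ and $W^\eps$ on Dirichlet edges, and absorbing the regularization term via the $\sqrt{\eps}$ that the discrete entropy inequality provides.
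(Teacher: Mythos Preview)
Your proof is correct and follows essentially the same approach as the paper: sum \eqref{3.lin2} over $i$ (using (A1) to collapse the fluxes into the single monotone function $\phi(M)=Mq(M)/p(M)$), test against $[M^\eps_K-M^*]^+$, drop the flux term by monotonicity, and control the $\eps$-regularization via Lemma~\ref{lem.edi} to extract the $\sqrt{\eps}$. The only cosmetic differences are that the paper splits the regularization into separate gradient and mass pieces ($J_6$ and $J_7$) and performs an explicit three-case analysis for the sign of the flux term, whereas you bundle the regularization into a single Cauchy--Schwarz estimate $\eps\Delta t\,\|W^\eps_\M\|_{1,2,\M}\|v_\M\|_{1,2,\M}$ and dispose of the flux sign in one line.
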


\begin{proof}
Let $\eps > 0$ be fixed. Then, summing \eqref{3.lin2} over $i$, we obtain
\begin{align*}
\eps \sum_{i=1}^n \left(- \sum_{\sigma \in \E_K} \tau_\sigma D_{K,\sigma} w^\eps_i + \m(K) w^\eps_{i,K} \right) &+
\m(K) \frac{M^{\eps}_{K}-M^{k-1}_{K}}{\Delta t} \\
& +\sum_{i=1}^n \sum_{\sigma \in \E_K} \F^{\eps}_{i,K,\sigma} = 0 \quad \mbox{for all }K \in \T.
\end{align*}
Multiplying this equation by $\Delta t [M^\eps_K-M^*]^+$, summing over $K \in \T$, 
and using $\frac{1}{2}(x^2-y^2) \leq x(x-y)$, we obtain
\begin{align*}
\sum_{K\in\T}\frac{\m(K)}{2} \left( [M^\eps_K-M^*]^2_+ - [M^{k-1}_K-M^*]^2_+ \right) \leq J_5+J_6+J_7,
\end{align*}
where
\begin{align*}
J_5 &= -\Delta t\sum_{i=1}^n \sum_{\substack{\sigma\in\E \\ K=K_\sigma}} \F^{\eps}_{i,K,\sigma} [M^\eps_K-M^*]_+,\\
J_6 &= \eps \Delta t \sum_{i=1}^n \sum_{\substack{\sigma\in\E \\ K=K_\sigma}} \tau_\sigma D_{K,\sigma} w^\eps_i [M^\eps_K - M^*]^+,\\
J_7 &= -\eps \Delta t \sum_{i=1}^n \sum_{K \in \T} \m(K) w^\eps_{i,K} [M^\eps_K - M^*]^+.
\end{align*}
We use discrete integration by parts to rewrite $J_5$ as
\begin{align*}
J_5 = - \Delta t \sum_{\substack{\sigma\in\E \\ K=K_\sigma}} \tau_{\sigma} (p^\eps_\sigma)^2 D_{K,\sigma} \left(\frac{M^\eps q(M^\eps)}{p(M^\eps)}\right) \, D_{K,\sigma} [M^\eps - M^*]^+.
\end{align*}
We assume that for $\sigma \in \E$ we have $M^\eps_{K,\sigma} \geq M^\eps_K$. Then, since the function $M \mapsto M q(M)/p(M)$ is increasing (see definition 
\eqref{1.q}), we deduce that $D_{K,\sigma}(M^\eps \, q(M^\eps)/p(M^\eps)) \geq 0$.
We distinguish the following cases:
\begin{itemize}
	\item  $M^* \geq M^\eps_{K,\sigma} \geq M^\eps_K$ $\Rightarrow$
	$D_{K,\sigma}[M^\eps-M^*]^+=0$;
	\item $M^\eps_{K,\sigma} \geq M^* \geq M^\eps_K$ $\Rightarrow$
	$D_{K,\sigma}[M^\eps-M^*]^+=M^\eps_{K,\sigma}-M^* \geq 0$;
	\item $M^\eps_{K,\sigma} \geq M^\eps_K \geq M^*$ $\Rightarrow$
	$D_{K,\sigma}[M^\eps-M^*]^+=M^\eps_{K,\sigma}-M^\eps_K \geq 0$.
\end{itemize}
This implies that $D_{K,\sigma}(M^\eps  q(M^\eps)/p(M^\eps)) D_{K,\sigma}[M^\eps-M^*]^+ \geq 0$ if $M^\eps_{K,\sigma} \geq M^\eps_K$. A similar argument shows that $D_{K,\sigma}(M^\eps  q(M^\eps)/p(M^\eps)) D_{K,\sigma}[M^\eps-M^*]^+ \geq 0$ also in the case $M^\eps_K \geq M^\eps_{K,\sigma}$ and we deduce that $J_5 \leq 0$.

For $J_6$, we apply discrete integration by parts and the Cauchy-Schwarz inequality:
$$
|J_6| \leq \eps^{1/2} \left(\eps \Delta t \sum_{i=1}^n \sum_{\sigma \in \E} \tau_\sigma (D_\sigma w^\eps_i)^2 \right)^{1/2} \left(\Delta t \sum_{\sigma \in \E} \tau_\sigma (D_{\sigma} [M^\eps-M^*]^+)^2 \right)^{1/2}.
$$
It follows from Lemma~\ref{lem.edi} and the $L^\infty$ bound $M^\eps_K \leq 1$ 
for $K \in \T$ that
$$
|J_6| \leq 2 H(u^{k-1}_\M)^{1/2} \, (1+M^*) \left(\Delta t\sum_{\sigma \in \E} \tau_\sigma \right)^{1/2} \eps^{1/2}.
$$
Finally, we use the Cauchy-Schwarz inequality together with Lemma~\ref{lem.edi} 
and then the $L^\infty$ bound $M^\eps_K \leq 1$ for $K \in \T$ to estimate $J_7$:
\begin{align*}
|J_7| &\leq \eps^{1/2} H(u^{k-1}_\M)^{1/2} \left(\Delta t\sum_{K\in\T} \m(K) \left([M^\eps_K - M^*]^+\right)^2\right)^{1/2} \\
&\leq H(u^{k-1}_\M)^{1/2} \, (1+M^*) \Delta t^{1/2} \,\m(\Omega)^{1/2} \eps^{1/2}.
\end{align*}
Gathering all the previous estimates, we deduce the existence of a constant $C > 0$ 
such that~\eqref{in.borne.sup} holds.
\end{proof}

We conclude from Lemma~\ref{lem.borne.sup} that passing to the limit $\eps\to 0$ in~\eqref{in.borne.sup} that
\begin{align*}
\sum_{K\in\T} \m(K) \left([M_K-M^*]^+\right)^2 \leq 0,
\end{align*}
recall that $M^\eps_K \to M_K$ as $\eps \to 0$ for $K \in \T$. This shows that $M_K \leq M^* <1$ for all $K\in\T$.
We can perform the limit $\eps\to 0$ in~\eqref{3.lin2}, which completes the proof of Theorem \ref{thm.ex}.


\section{A priori estimates}\label{sec.apriori}

In this section, we establish some uniform estimates for the solutions to scheme \eqref{sch.ic}-\eqref{2.psigma}. 

\subsection{Gradient estimate}

We deduce the following gradient estimate from the entropy inequality \eqref{2.edi}.

\begin{lem}[Gradient estimate]\label{lem.grad}
Let the assumptions of Theorem \ref{thm.ex} hold. Then there exists a constant
$C_1>0$ only depending on $H(u_\M^0)$, $\Omega$, $q$, $p$, and the upper bound $M^*$ defined in Theorem~\ref{thm.ex} such that
$$
  \sum_{k=1}^{N_T}\Delta t\bigg\|\frac{u_{i,\M}^kq(M_\M^k)}{p(M_\M^k)}
	\bigg\|^2_{1,2,\M} \le C_1 \quad \mbox{for all }1 \leq i \leq n.
$$
\end{lem}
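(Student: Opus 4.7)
The plan is to deduce the gradient estimate from the discrete entropy dissipation inequality \eqref{2.edi}, exploiting the crucial fact from Theorem~\ref{thm.ex} that $M_K^k \le M^* < 1$, which prevents the degeneracy of $p$ from affecting the estimate.

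First, I would sum \eqref{2.edi} over $k=1,\ldots,N_T$ to obtain the global entropy production bound
$$
  \Delta t\sum_{k=1}^{N_T}\sum_{i=1}^n \sum_{\sigma\in\E}\tau_\sigma
  (p_\sigma^k)^2\Bigl(D_\sigma\sqrt{v_i^k}\Bigr)^2 \le H(u_\M^0),
$$
where I write $v_{i,K}^k := u_{i,K}^k q(M_K^k)/p(M_K^k)$ for brevity. Because $p$ is continuous, decreasing, and $M_K^k \le M^* < 1$, the two-point average satisfies $(p_\sigma^k)^2 \ge p(M^*)^2 > 0$ uniformly in $\sigma$ and $k$. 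Moreover, since $0 \le u_{i,K}^k \le M_K^k \le M^*$ and $q/p$ is continuous on $[0,M^*]$, there exists a constant $C_0>0$ depending only on $p$, $q$, and $M^*$ such that $0 \le v_{i,K}^k \le C_0$ for all $i$, $K$, $k$. The same bound applies at the Dirichlet boundary values since $M^D \le M^*$.

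Next, I would use the elementary identity
$$
  D_{K,\sigma} v_i^k = \bigl(\sqrt{v_{i,K,\sigma}^k} + \sqrt{v_{i,K}^k}\bigr)\,
  D_{K,\sigma}\sqrt{v_i^k}
$$
to convert the control of gradients of $\sqrt{v_i^k}$ into control of gradients of $v_i^k$. Combining the pointwise upper bound $v_{i,K}^k \le C_0$ with the uniform lower bound on $(p_\sigma^k)^2$ yields
$$
  \sum_{\sigma\in\E}\tau_\sigma (D_\sigma v_i^k)^2
  \le 4 C_0 \sum_{\sigma\in\E}\tau_\sigma \bigl(D_\sigma\sqrt{v_i^k}\bigr)^2
  \le \frac{4 C_0}{p(M^*)^2}\, I_i(u_\M^k),
$$
so that summing over $k$ with weight $\Delta t$ gives a uniform bound on the discrete seminorm part of $\|v_{i,\M}^k\|_{1,2,\M}^2$ in terms of $H(u_\M^0)$, $p(M^*)$, and $C_0$.

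Finally, the $L^2$ part of the norm is harmless: since $v_{i,K}^k \le C_0$, one has $\|v_{i,\M}^k\|_{0,2,\M}^2 \le C_0^2\, \m(\Omega)$, and summing this in time contributes at most $T C_0^2 \m(\Omega)$. Adding both contributions yields the claimed constant $C_1$ depending only on $H(u_\M^0)$, $\Omega$, $p$, $q$, and $M^*$. There is no real obstacle here; the whole argument hinges on the uniform upper bound $M^* < 1$ established in Theorem~\ref{thm.ex}, which prevents $p$ from vanishing and thus allows one to trade the factor $(p_\sigma^k)^2$ inside the entropy production for a strictly positive constant.
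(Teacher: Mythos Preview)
Your proof is correct and follows essentially the same approach as the paper's: both extract the bound on $D_\sigma\sqrt{v_i^k}$ from the telescoped entropy inequality \eqref{2.edi} using $(p_\sigma^k)^2\ge p(M^*)^2$, then pass to $D_\sigma v_i^k$ via the factorization $a^2-b^2=(a+b)(a-b)$ together with the uniform bound $v_{i,K}^k\le q(M^*)/p(M^*)$ (the paper phrases this step through the inequality $x^2-y^2\le 2x(x-y)$ and the monotonicity of $q/p$, which amounts to the same thing). The only cosmetic difference is that the paper dispatches the $L^2$ part in one sentence by invoking the $L^\infty$ bound, exactly as you do.
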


\begin{proof}
Let $i \in \{1,\ldots,n \}$. Thanks to the uniform $L^\infty$ bound for $u_\M^k$, 
it is sufficient to show that there exists a constant $C>0$ independent of
$\Delta x$ and $\Delta t$ such that
$$
   \sum_{k=1}^{N_T}\Delta t\bigg|\frac{u_{i,\M}^kq(M_\M^k)}{p(M_\M^k)}
	\bigg|^2_{1,2,\M} \le C.
$$
To prove this estimate, we start from the following bound which comes from
the discrete entropy inequality~\eqref{2.edi}:
\begin{equation}\label{4.aux}
  \sum_{k=1}^{N_T}\Delta t\sum_{\sigma\in\E }\tau_\sigma
	 \bigg(D_\sigma\bigg(\sqrt{\frac{u_i^k q(M^k)}{p(M^k)}}\bigg)\bigg)^2
	\le \frac{H(u_\M^0)}{p(M^*)^2}.
\end{equation}
Using the inequality $x^2-y^2\le 2x(x-y)$ and $u_{i,K,\sigma}^k\le 1$, 
we can write
\begin{align*}
  \sum_{k=1}^{N_T}\Delta t\sum_{\sigma\in\E}
	\tau_\sigma\bigg(D_\sigma\bigg(\frac{u_i^k q(M^k)}{p(M^k)}\bigg)\bigg)^2 
	&\le 4\sum_{k=1}^{N_T}\Delta t\sum_{\sigma\in\E }
	\tau_\sigma\frac{u_{i,K,\sigma}^k q(M_{K,\sigma}^k)}{p(M_{K,\sigma}^k)}
	\bigg(D_\sigma\bigg(\sqrt{\frac{u_i^k q(M^k)}{p(M^k)}}\bigg)\bigg)^2 \\
  &\le 4\sum_{k=1}^{N_T}\Delta t\sum_{\sigma\in\E}
	\tau_\sigma\frac{q(M_{K,\sigma}^k)}{p(M_{K,\sigma}^k)}
	\bigg(D_\sigma\bigg(\sqrt{\frac{u_i^k q(M^k)}{p(M^k)}}\bigg)\bigg)^2.
\end{align*}
Thanks to~\cite[Lemma 3.4]{DMZ18}, we know that the function $x \mapsto \sqrt{q(x)/p(x)}$ is strictly increasing for $x \in (0,1)$. We use the $L^\infty$ bound $M^k_K \leq M^*$ for $K \in \T$ given in Theorem~\ref{thm.ex} to conclude that
$$
  \sum_{k=1}^{N_T}\Delta t\sum_{\sigma\in\E}
	\tau_\sigma\bigg(D_\sigma\bigg(\frac{u_i^k q(M^k)}{p(M^k)}\bigg)\bigg)^2  \le \frac{4q(M^*)}{p(M^*)}\sum_{k=1}^{N_T}\Delta t
	\sum_{\sigma\in\E }\tau_\sigma
	\bigg(D_\sigma\bigg(\sqrt{\frac{u_i^k q(M^k)}{p(M^k)}}\bigg)\bigg)^2.
$$
In view of \eqref{4.aux}, this shows the lemma.
\end{proof}


\subsection{Estimate for the time difference}

We wish to apply the compactness result from \cite{ACM17}. To this end, we need
to prove a uniform estimate on the difference $u_{i,K}^k-u_{i,K}^{k-1}$.

\begin{lem}[Time estimate]\label{lem.time}
Let the assumptions of Theorem \ref{thm.ex} hold. Then there exists a constant
$C_2>0$ not depending on $\Delta x$ and $\Delta t$ such that for all
$i \in \{ 1,\ldots,n \}$ and $\phi\in C_0^\infty(Q_T)$,
$$
  \sum_{k=1}^{N_T}\Delta t\sum_{K\in\T}(u_{i,K}^k-u_{i,K}^{k-1})
	\phi(x_K,t_k) \le C_2\Delta t\|\na\phi\|_{L^\infty(Q_T)}.
$$
\end{lem}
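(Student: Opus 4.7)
The plan is to exploit scheme \eqref{sch1} by expressing the time difference as $\m(K)(u_{i,K}^k-u_{i,K}^{k-1}) = -\Delta t\sum_{\sigma\in\E_K}\F_{i,K,\sigma}^k$, multiplying by $\phi(x_K,t_k)$, summing over $K\in\T$, and then performing a discrete integration by parts in space. Using the flux skew-symmetry $\F_{i,K,\sigma}^k+\F_{i,L,\sigma}^k=0$ at interior edges $\sigma=K|L$, the vanishing of the Neumann fluxes, and the fact that $\phi\in C_0^\infty(Q_T)$ vanishes on $\partial\Omega$ (so one may take $\phi_\sigma=0$ on Dirichlet edges), the spatial sum on the right-hand side reorganizes into a single edge sum of the form $\Delta t\sum_{\sigma\in\E}\F_{i,K_\sigma,\sigma}^k\, D_{K_\sigma,\sigma}\phi(\cdot,t_k)$.

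To bound each edge contribution I will use two ingredients. First, by \eqref{sch2}, the monotonicity of $p$ on $[0,1]$, and the upper bound $M_K^k\le M^*<1$ of Theorem~\ref{thm.ex}, one has $|\F_{i,K,\sigma}^k|\le \tau_\sigma\alpha_i p(0)^2\,|D_{K,\sigma}(u_i^k q(M^k)/p(M^k))|$. Second, by the mean value theorem applied to $\phi$, $|D_{K_\sigma,\sigma}\phi(\cdot,t_k)|\le d_\sigma\|\nabla\phi\|_{L^\infty(Q_T)}$; this remains valid on Dirichlet edges because $\phi$ vanishes on $\partial\Omega$ and $d_\sigma=\dist(x_K,\sigma)$ there. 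Using the identity $\tau_\sigma d_\sigma=\m(\sigma)$ and a Cauchy-Schwarz inequality in $\sigma$, the edge sum is controlled by $\bigl(\sum_\sigma\tau_\sigma|D_{K,\sigma}(u_i^k q(M^k)/p(M^k))|^2\bigr)^{1/2}\bigl(\sum_\sigma\m(\sigma)d_\sigma\bigr)^{1/2}$, the second factor being uniformly bounded by a constant times $\m(\Omega)^{1/2}$ via the mesh regularity \eqref{2.regmesh} and \eqref{2.estmesh}.

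It then remains to sum in time using a second Cauchy-Schwarz inequality together with $N_T\Delta t=T$ and to absorb the resulting quantity $\Delta t\sum_k|u_{i,\M}^k q(M_\M^k)/p(M_\M^k)|_{1,2,\M}^2$ by the constant $C_1$ furnished by Lemma~\ref{lem.grad}. Collecting all factors yields an estimate of the required form $C_2\Delta t\|\nabla\phi\|_{L^\infty(Q_T)}$. The main technical obstacle I expect is the mesh-geometric bookkeeping: transforming the pointwise Lipschitz control $|D_{K_\sigma,\sigma}\phi|\le d_\sigma\|\nabla\phi\|_\infty$ into the $H^1$-type weighted sum controlled by Lemma~\ref{lem.grad} relies crucially on $\tau_\sigma d_\sigma=\m(\sigma)$ and on \eqref{2.regmesh}, and one must verify that the final constant $C_2$ depends only on $\Omega$, $p(0)$, $q(M^*)/p(M^*)$, $T$, $\xi$, and the $\alpha_i$, but not on $\Delta x$ or $\Delta t$.
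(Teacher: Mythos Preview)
Your proposal is correct and follows essentially the same route as the paper: multiply the scheme by $\phi_K^k$, perform discrete integration by parts to convert the flux sum into an edge sum involving $D_{K,\sigma}\phi$, then apply Cauchy--Schwarz to separate the discrete gradient of $u_i^k q(M^k)/p(M^k)$ (controlled by Lemma~\ref{lem.grad} and $p_\sigma^k\le p(0)$) from the discrete gradient of $\phi$ (controlled via $|D_{K,\sigma}\phi|\le \dist_\sigma\|\nabla\phi\|_{L^\infty}$ and the mesh estimates \eqref{2.regmesh}--\eqref{2.estmesh}). The only cosmetic difference is that the paper applies a single Cauchy--Schwarz over the full $(k,\sigma)$ sum rather than first in~$\sigma$ and then in~$k$, which yields the same bound.
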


\begin{proof}
We abbreviate $\phi_K^k:=\phi(x_K,t_k)$ and fix $i\in\{1,\ldots,n\}$. We multiply \eqref{sch1} by $\Delta t\phi_K^k$ and sum over $K\in\T$ and 
$k=1,\ldots,N_T$
\begin{align*}
  \sum_{k=1}^{N_T}\sum_{K\in\T}\m(K)&(u_{i,K}^k-u_{i,K}^{k-1})\phi_K^k
	= -\sum_{k=1}^{N_T}\Delta t\sum_{\substack{\sigma\in\E \\ K=K_\sigma}}
	\F_{i,K,\sigma}^k\phi_K^k 
	=: J_8.
\end{align*}
Inserting the definition of $\F_{i,K,\sigma}^k$ and using the symmetry of
$\tau_\sigma$ with respect to $\sigma=K|L$, we find that
$$
  J_8 = -\sum_{k=1}^{N_T}\Delta t\sum_{\substack{\sigma\in\E \\ K=K_\sigma}}
	\tau_\sigma(p_\sigma^k)^2 D_{K,\sigma}\bigg(\frac{u_i^kq(M^k)}{p(M^k)}\bigg)
	D_{K,\sigma} \phi^k.
$$
Using the Cauchy-Schwarz inequality, we obtain $|J_8| \leq J_{80} J_{81}$, where
\begin{align*}
  J_{80} &= \bigg(\sum_{k=1}^{N_T}\Delta t|\phi_\M^k|_{1,2,\M}^2\bigg)^{1/2}, \\
  J_{81} &= \bigg(\sum_{k=1}^{N_T}\Delta t\sum_{\sigma\in\E}
	\tau_\sigma \big((p_\sigma^k)^2\big)^2 \bigg[D_\sigma\bigg(\frac{u_i^kq(M^k)}{p(M^k)}\bigg)\bigg]^2\bigg)^{1/2}.
\end{align*}
It follows from the mesh properties \eqref{2.regmesh} and \eqref{2.estmesh} that
\begin{align*}
  J_{80} &\le \|\na\phi\|_{L^\infty(Q_T)}\bigg(\sum_{k=1}^{N_T}\Delta t
	\sum_{\sigma\in\E}\m(\sigma)\dist_\sigma\bigg)^{1/2} \\
	&\le \frac{1}{\xi^{1/2}} \|\na\phi\|_{L^\infty(Q_T)}\bigg(\sum_{k=1}^{N_T}\Delta t
	\sum_{K\in\T}\sum_{\sigma\in\E_K}\m(\sigma)\dist(x_K,\sigma)\bigg)^{1/2} \\
  &\le \frac{2^{1/2}}{\xi^{1/2}}\|\na\phi\|_{L^\infty(Q_T)}\bigg(\sum_{k=1}^{N_T}
	\Delta t\sum_{K\in\T}\m(K)\bigg)^{1/2}
	= \sqrt{\frac{2\m(\Omega)T}{\xi}}\|\na\phi\|_{L^\infty(Q_T)}.
\end{align*}
By Lemma \ref{lem.grad}, $J_{81}\le C_1p(0)^2$. This shows that $|J_8|\le
C_2\Delta t\|\na\phi\|_{L^\infty(Q_T)}$, concluding the proof.
\end{proof}


\subsection{Lower bound for the entropy production term}

In this section we establish a discrete counterpart of inequality~\eqref{1.ineq}.

\begin{lem}[Lower bound for the entropy production]\label{lem.lower}
Let the assumptions of Theorem \ref{thm.ex} hold. Then there exists a constant
$C_3>0$ depending on $p$, $q$, $a$, $b$, and $\kappa$ such that for $k=1,\ldots,N_T$,
\begin{align}\label{3.I}
  \sum_{i=1}^n I_i(u_\M^k) \ge \frac12 \sum_{i=1}^n
	\sum_{\sigma\in\E }\tau_\sigma \beta^k_{K,\sigma}
	\big(D_\sigma\sqrt{u_i^k}\big)^2
	+ C_3\sum_{\sigma\in\E }\tau_\sigma
	\frac{(M_\sigma^k)^{a-1}(D_\sigma M^k)^2}{(1-M_\sigma^k)^{1+b+\kappa}},
\end{align}
where $M_\sigma^k = \theta_\sigma M_K^k + (1-\theta_\sigma)M_{K,\sigma}^k$
for some $\theta_\sigma\in(0,1)$, 
$$
  \beta^k_{K,\sigma} = \min\big\{p(M_K^k)q(M_K^k),p(M_{K,\sigma}^k)
	q(M_{K,\sigma}^k)\big\},
$$
and we recall that $I_i(u_\M^k)$ is defined
in \eqref{2.ed}.
\end{lem}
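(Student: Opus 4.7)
The plan is to adapt to the discrete setting the continuous computation behind inequality~\eqref{1.ineq}, performed in~\cite[Lemma~3.4]{DMZ18}. Fix the time index $k$ (dropped from notation) and an edge $\sigma$, with neighboring values carrying subscripts $K$ and $K,\sigma$. Setting $\phi(M):=\sqrt{q(M)/p(M)}$, which is strictly increasing by~\cite[Lemma~3.4]{DMZ18}, I apply the symmetric discrete Leibniz rule to obtain
\begin{equation*}
D_{K,\sigma}\big(\sqrt{u_i}\,\phi(M)\big) = \tfrac{1}{2}\big(\phi(M_K)+\phi(M_{K,\sigma})\big)\,D_{K,\sigma}\sqrt{u_i} + \tfrac{1}{2}\big(\sqrt{u_{i,K}}+\sqrt{u_{i,K,\sigma}}\big)\,D_{K,\sigma}\phi(M).
\end{equation*}
Squaring, using $(\sqrt{u_{i,K}}+\sqrt{u_{i,K,\sigma}})\,D_{K,\sigma}\sqrt{u_i} = D_{K,\sigma}u_i$ in the cross term, and summing over $i=1,\ldots,n$ (with $\sum_i u_i = M$), the cross term collapses to $\tfrac{1}{2}(\phi^2(M_{K,\sigma})-\phi^2(M_K))(M_{K,\sigma}-M_K)$, which is nonnegative because $\phi^2=q/p$ is increasing.

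Multiplying this edgewise identity by $\tau_\sigma(p_\sigma^k)^2 = \tau_\sigma(p(M_K)^2+p(M_{K,\sigma})^2)/2$ and summing over $\sigma$, the contribution carrying $\sum_i(D_\sigma\sqrt{u_i})^2$ has the prefactor $\tfrac{1}{8}\tau_\sigma(p(M_K)^2+p(M_{K,\sigma})^2)(\phi(M_K)+\phi(M_{K,\sigma}))^2$. Since $p\phi=\sqrt{pq}$, the elementary chain
\begin{equation*}
(x_1^2+x_2^2)(y_1+y_2)^2 \ge (x_1y_1+x_2y_2)^2 \ge 4\min\{x_1^2y_1^2,\,x_2^2y_2^2\},\qquad x_i,y_i\ge 0,
\end{equation*}
bounds this prefactor below by $\tfrac{1}{2}\tau_\sigma\min\{p(M_K)q(M_K),p(M_{K,\sigma})q(M_{K,\sigma})\} = \tfrac{1}{2}\tau_\sigma\beta^k_{K,\sigma}$, yielding the first term on the right-hand side of~\eqref{3.I}.

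For the two remaining nonnegative contributions (the cross term and the $(D_\sigma\phi(M))^2$ term), I invoke the mean value theorem on $M\mapsto q(M)/p(M)$ to produce $M_\sigma^k$ strictly between $M_K^k$ and $M_{K,\sigma}^k$ with $\phi^2(M_{K,\sigma})-\phi^2(M_K) = (q/p)'(M_\sigma^k)(M_{K,\sigma}-M_K)$; differentiating~\eqref{1.q} gives the explicit identity $(q/p)'(M) = M^{a-1}/((1-M)^b p(M)^2) - q(M)/(Mp(M))$. Following~\cite[Lemma~3.4]{DMZ18}, I split the analysis into the regimes $M_\sigma^k\le 1-\delta$ and $M_\sigma^k>1-\delta$ for $\delta>0$ small, to be tuned. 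In the nondegenerate regime, $p,q,1/p,1/(1-M)$ are all bounded on $[0,1-\delta]$ and the target weight $M_\sigma^{a-1}/(1-M_\sigma)^{1+b+\kappa}$ is itself bounded, so direct estimation of the cross term suffices. In the singular regime, the main obstacle is twofold: one must dominate the negative contribution $-q(M_\sigma)/(M_\sigma p(M_\sigma))$ of $(q/p)'(M_\sigma)$ by its singular positive part, and extract the extra polynomial weight $(1-M_\sigma)^{-(1+\kappa)}$ from the prefactor $(p(M_K)^2+p(M_{K,\sigma})^2)$. Both issues are handled using (H4): integrating $-(1-M)^{1+\kappa}p'(M)/p(M)\to c$ delivers the quantitative bound $1/p(M)^2\ge C_\delta(1-M)^{-(1+\kappa)}$ near $M=1$, and the monotonicity of $p$ ensures $\max\{p(M_K)^2,p(M_{K,\sigma})^2\}\ge p(M_\sigma)^2$, so the prefactor supplies the missing singular factor; the $(D_\sigma\phi(M))^2$ term serves as a reservoir to absorb the negative piece of $(q/p)'$ in the borderline situation where $M_K,M_{K,\sigma}$ both sit very close to $M_\sigma$. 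Tuning $\delta$ depending only on $p,q,a,b,\kappa$ produces the uniform constant $C_3$.
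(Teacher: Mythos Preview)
Your symmetric Leibniz decomposition and the elementary chain you use to extract the factor $\tfrac12\beta^k_{K,\sigma}$ in front of $(D_\sigma\sqrt{u_i})^2$ are correct and give a cleaner route to the first term on the right of~\eqref{3.I} than the paper's asymmetric case split. The cross term is also handled correctly: summing over $i$ collapses it to $\tfrac12\big((q/p)(M_{K,\sigma})-(q/p)(M_K)\big)D_{K,\sigma}M\ge 0$.

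The gap is in the singular regime $M_\sigma>1-\delta$. You apply the mean value theorem to $q/p$ and try to extract the weight $(1-M_\sigma)^{-(1+b+\kappa)}$ from the positive part $M_\sigma^{a-1}/\big((1-M_\sigma)^b p(M_\sigma)^2\big)$ of $(q/p)'(M_\sigma)$, multiplied by the prefactor $p_\sigma^2$. Your claim that ``the prefactor supplies the missing singular factor'' is wrong: monotonicity of $p$ only yields $p_\sigma^2\ge\tfrac12 p(M_\sigma)^2$, hence
\[
  p_\sigma^2\cdot\frac{M_\sigma^{a-1}}{(1-M_\sigma)^b p(M_\sigma)^2}
  \ \ge\ \frac12\,\frac{M_\sigma^{a-1}}{(1-M_\sigma)^b},
\]
which is short of the target by exactly the factor $(1-M_\sigma)^{-(1+\kappa)}$. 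The bound $1/p(M)^2\ge C_\delta(1-M)^{-(1+\kappa)}$ does not help here, since it is the \emph{ratio} $p_\sigma^2/p(M_\sigma)^2$ that matters, and this ratio can be arbitrarily close to $1/2$ when $M_K$ and $M_{K,\sigma}$ are both near $M_\sigma$. Your fallback --- that the $(D_\sigma\phi(M))^2$ term ``absorbs the negative piece of $(q/p)'$'' --- misidentifies the obstacle: the negative piece is not the problem (indeed $(q/p)'(M)\ge \tfrac{a}{a+1}M^{a-1}/((1-M)^bp(M)^2)$ everywhere), the missing $(1-M_\sigma)^{-(1+\kappa)}$ is.

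The paper's proof avoids this by applying the mean value theorem to $f=\sqrt{q/p}$ rather than to $q/p$, so that $M_\sigma$ is defined through $D_{K,\sigma}f(M)=f'(M_\sigma)D_{K,\sigma}M$. The singular weight then comes directly from the \emph{square} term $p_\sigma^2\,M_{K,\sigma}\,f'(M_\sigma)^2(D_\sigma M)^2$ (their $J_{903}$), using $f'/f\ge C(1-M)^{-(1+\kappa)}$ from (H4) together with $p(M_\sigma)q(M_\sigma)\ge\tfrac12 C_{pq}(1-M_\sigma)^{1-b+\kappa}$ and $p_\sigma^2\ge\tfrac12 p(M_\sigma)^2$. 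Your decomposition contains the same square term, so the simplest repair is to drop the attempt to mine the cross term, define $M_\sigma$ via the mean value theorem on $\phi$ instead of on $q/p$, and bound your third term $\tfrac14\tau_\sigma p_\sigma^2\sum_i(\sqrt{u_{i,K}}+\sqrt{u_{i,K,\sigma}})^2(D_\sigma\phi(M))^2\ge\tfrac14\tau_\sigma p_\sigma^2\max\{M_K,M_{K,\sigma}\}\,\phi'(M_\sigma)^2(D_\sigma M)^2$ exactly as the paper does.
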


\begin{proof}
To simplify the presentation, we omit the superindex $k$ throughout the proof.
Summing definition \eqref{2.ed} for $I_i(u_\M)$ over $i=1,\ldots,n$ 
and setting $f(x)=\sqrt{q(x)/p(x)}$, we obtain
$$
  I := \sum_{i=1}^n I_i(u_\M)
	= \sum_{i=1}^n\sum_{\substack{\sigma\in\E \\ K=K_\sigma}}\tau_\sigma
	p_\sigma^2\big(D_{K,\sigma}(\sqrt{u_i} f(M))\big)^2. 
$$
We split the sum into two 
parts and use the product rule for finite volumes. Then $I=J_{90}+J_{91}$, where
\begin{align*}
  J_{90} &= \sum_{i=1}^n\sum_{\substack{\sigma\in\E \\ K=K_\sigma}}\tau_\sigma
	p_\sigma^2\big(\sqrt{u_{i,K,\sigma}}D_{K,\sigma}(f(M)) + D_{K,\sigma}(\sqrt{u_i})
	f(M_K)\big)^2\mathbf{1}_{\{M_{K,\sigma}\ge M_K\}}, \\
  J_{91} &= \sum_{i=1}^n\sum_{\substack{\sigma\in\E \\ K=K_\sigma}}\tau_\sigma
	p_\sigma^2\big(\sqrt{u_{i,K}}D_{K,\sigma}(f(M)) + D_{K,\sigma}(\sqrt{u_i})
	f(M_{K,\sigma})\big)^2\mathbf{1}_{\{M_{K,\sigma} < M_K\}}.
\end{align*}
A Taylor expansion of $f$ around $M_{K,\sigma}$ gives
\begin{align*}
  J_{90} &= \sum_{i=1}^n\sum_{\substack{\sigma\in\E \\ K=K_\sigma}}\tau_\sigma
	p_\sigma^2\big(\sqrt{u_{i,K,\sigma}}D_{K,\sigma}(M) f'(M_\sigma) 
	+ D_{K,\sigma}(\sqrt{u_i})f(M_K)\big)^2\mathbf{1}_{\{M_{K,\sigma}\ge M_K\}}, \\
  J_{91} &= \sum_{i=1}^n\sum_{\substack{\sigma\in\E \\ K=K_\sigma}}\tau_\sigma
	p_\sigma^2\big(\sqrt{u_{i,K}}D_{K,\sigma}(M) f'(M_\sigma) 
	+ D_{K,\sigma}(\sqrt{u_i})f(M_{K,\sigma})\big)^2\mathbf{1}_{\{M_{K,\sigma} < M_K\}},
\end{align*}
where $M_\sigma = \theta_\sigma M_{K,\sigma} + (1-\theta_\sigma)M_K$ for some
$\theta_\sigma\in(0,1)$ and for $K\in\T$ and $\sigma\in\E_K$. 

We consider the term $J_{90}$ first. Expanding the square gives three terms,
$J_{90} = J_{901} + J_{902} + J_{903}$, where
\begin{align*}
  J_{901} &= \sum_{i=1}^n\sum_{\sigma\in\E }\tau_\sigma
	p_\sigma^2 f(M_K)^2\big(D_{\sigma}(\sqrt{u_i})\big)^2
	\mathbf{1}_{\{M_{K,\sigma}\ge M_K\}}, \\
  J_{902} &= 2\sum_{i=1}^n\sum_{\substack{\sigma\in\E \\ K=K_\sigma}}\tau_\sigma
	p_\sigma^2\sqrt{u_{i,K,\sigma}}D_{K,\sigma}(\sqrt{u_i})f'(M_\sigma)f(M_K)
	D_{K,\sigma}(M)\mathbf{1}_{\{M_{K,\sigma}\ge M_K\}}, \\
	J_{903} &= \sum_{i=1}^n\sum_{\sigma\in\E }\tau_\sigma
	p_\sigma^2 u_{i,K,\sigma} f'(M_\sigma)^2(D_{\sigma}M)^2
	\mathbf{1}_{\{M_{K,\sigma}\ge M_K\}} \\
	&= \sum_{\sigma\in\E }\tau_\sigma p_\sigma^2
	M_{K,\sigma}f'(M_\sigma)^2(D_{\sigma}M)^2
	\mathbf{1}_{\{M_{K,\sigma}\ge M_K\}},
\end{align*}
and in the last equality we used the identity 
$\sum_{i=1}^n u_{i,K,\sigma}=M_{K,\sigma}$.

Definition~\eqref{2.psigma} of $p_{\sigma}^2$ implies that $p_\sigma^2 \geq p(M_K)^2/2$. Then, by definition of $f$,
$$
  J_{901} = \frac{1}{2}\sum_{i=1}^n\sum_{\sigma\in\E }\tau_\sigma
	p(M_K) q(M_K)\big(D_{\sigma}(\sqrt{u_i})\big)^2
	\mathbf{1}_{\{M_{K,\sigma}\ge M_K\}}.
$$
The function $f$ is strictly increasing \cite[Lemma 3.4]{DMZ18}. Since
$x(x-y)\ge\frac12(x^2-y^2)$, it follows that
\begin{align*}
  J_{902} &\ge \sum_{i=1}^n\sum_{\substack{\sigma\in\E \\ K=K_\sigma}}\tau_\sigma
  p_\sigma^2(u_{i,K,\sigma}-u_{i,K})f'(M_\sigma) f(M_K) D_{K,\sigma}(M)
	\mathbf{1}_{\{M_{K,\sigma}\ge M_K\}} \\
	&= \sum_{\sigma\in\E }\tau_\sigma p_\sigma^2
	(D_{\sigma}M)^2f'(M_\sigma)f(M_K)\mathbf{1}_{\{M_{K,\sigma}\ge M_K\}} \ge 0.
\end{align*}

It remains to estimate $J_{903}$. For this, we set 
$J_{903} = \sum_{\sigma\in\E}J_{903}(\sigma)$, where
$$
  J_{903}(\sigma) = \tau_\sigma p_\sigma^2 M_{K,\sigma}f'(M_\sigma)^2
	(D_{\sigma}M)^2\mathbf{1}_{\{M_{K,\sigma}\ge M_K\}}.
$$
Thanks to \cite[Lemma 3.1]{DMZ18}, there exists a constant $C_{pq}$ such that
$$
  \lim_{M\to 1}\frac{p(M)q(M)}{(1-M)^{1-b+\kappa}} = C_{pq}\in(0,\infty).
$$
We deduce that there exists $\delta\in(0,1/2)$ such that
for all $M_\sigma>1-\delta$,
\begin{equation}\label{4.pq}
  \frac{p(M_\sigma)q(M_\sigma)}{(1-M_\sigma)^{1-b+\kappa}} \ge \frac{C_{pq}}{2}.
\end{equation}
We distinguish the cases (i) $0\le M_\sigma\le 1-\delta$ and
(ii) $1-\delta<M_\sigma<1$.

Consider first case (i). Modifying slightly the proof of \cite[Lemma 3.4]{DMZ18},
it holds that for all $0\le M_\sigma\le 1-\delta$,
$$
  f'(M_\sigma) \ge \frac{a}{2M_\sigma}f(M_\sigma), \quad
	p(M_\sigma)q(M_\sigma) \ge \frac{p(1-\delta)^2M_\sigma^a}{p(0)^2(a+1)}.
$$
On the set $\{M_{K,\sigma} \geq M_K \}$ we have $M_{K,\sigma}\ge M_\sigma\ge M_K$, and thus, $p_\sigma^2 \geq p(M_K)^2/2 \geq p(M_{\sigma})^2/2$. Therefore,
taking into account the definition of $f$,
\begin{align*}
  J_{903}(\sigma) &\ge \tau_\sigma \frac{p(M_\sigma)^2}{2} M_{K,\sigma}\frac{a^2}{4M_\sigma^2}
	f(M_\sigma)^2(D_\sigma M)^2\mathbf{1}_{\{M_{K,\sigma}\ge M_K\}} \\
  &= \frac{a^2}{8}
	\tau_\sigma p(M_\sigma)q(M_\sigma)\frac{M_{K,\sigma}}{M_\sigma^2}(D_\sigma M)^2
	\mathbf{1}_{\{M_{K,\sigma}\ge M_K\}} \\
	&\ge \frac{a^2 p(1-\delta)^2}{8(a+1)p(0)^2}
	\tau_\sigma M_\sigma^{a-1}\frac{M_{K,\sigma}}{M_\sigma}(D_\sigma M)^2
	\mathbf{1}_{\{M_{K,\sigma}\ge M_K\}} \\
	&\ge \frac{a^2 p(1-\delta)^2}{8(a+1)p(0)^2}
	\tau_\sigma M_\sigma^{a-1}(D_\sigma M)^2\mathbf{1}_{\{M_{K,\sigma}\ge M_K\}},
\end{align*}
where we used $M_{K,\sigma}\ge M_\sigma$ in the last inequality.
Since $M_\sigma\le 1-\delta$, we have $(1-M_\sigma)^{1+b+\kappa}
\ge \delta^{1+b+\kappa}$ and consequently,
$$
  J_{903}(\sigma) \ge \frac{a^2 p(1-\delta)^2\delta^{1+b+\kappa}}{8(a+1)p(0)^2}
	\frac{\tau_\sigma M_\sigma^{a-1}}{(1-M_\sigma)^{1+b+\kappa}}
	(D_\sigma M)^2\mathbf{1}_{\{M_{K,\sigma}\ge M_K\}}.
$$

In case (ii), using $M_{K,\sigma}\ge M_\sigma>1-\delta$ and
$p_\sigma^2 \ge p(M_K)^2/2 \ge p(M_\sigma)^2/2$, we find that
\begin{align*}
  J_{903}(\sigma) &\ge \frac{1}{2} (1-\delta)\tau_\sigma p(M_\sigma)^2 f'(M_\sigma)^2
  (D_\sigma M)^2\mathbf{1}_{\{M_{K,\sigma}\ge M_K\}} \\
	&\ge \frac{1}{2} (1-\delta)\tau_\sigma p(M_\sigma)q(M_\sigma)
	\bigg(\frac{f'(M_\sigma)}{f(M_\sigma)}\bigg)^2
	(D_\sigma M)^2\mathbf{1}_{\{M_{K,\sigma}\ge M_K\}}.
\end{align*}
The proof of \cite[Lemma 3.4]{DMZ18} shows that there exists a constant $C_4>0$
such that 
$$
  \frac{f'(x)}{f(x)} \ge \frac{C_4}{(1-x)^{1+\kappa}}\quad\mbox{for }\frac12<x<1.
$$
Hence, together with \eqref{4.pq}, we infer that
\begin{align*}
  J_{903}(\sigma) &\ge \frac{1}{2}(1-\delta)C_4^2\tau_\sigma
	\frac{p(M_\sigma)q(M_\sigma)}{(1-M_\sigma)^{1-b+\kappa}}
	(1-M_\sigma)^{-1-b-\kappa}(D_\sigma M)^2\mathbf{1}_{\{M_{K,\sigma}\ge M_K\}} \\
  &\ge \frac14(1-\delta)C_{pq}C_4^2\tau_\sigma(1-M_\sigma)^{-1-b-\kappa}
	(D_\sigma M)^2\mathbf{1}_{\{M_{K,\sigma}\ge M_K\}} \\
	&\ge \frac14(1-\delta)C_{pq}C_4^2\tau_\sigma
	\frac{M_\sigma^{a-1}}{(1-M_\sigma)^{1+b+\kappa}}
	(D_\sigma M)^2\mathbf{1}_{\{M_{K,\sigma}\ge M_K\}},
\end{align*}
where in the last step we used $M_\sigma\le 1$ and $a\geq 1$. 
We have proved that in
both cases (i) and (ii), there exists a constant $C_5>0$ such that
\begin{align*}
  J_{903} \ge C_5\sum_{\sigma\in\E }\tau_\sigma
	\frac{M_\sigma^{a-1}}{(1-M_\sigma)^{1+b+\kappa}}
	(D_\sigma M)^2\mathbf{1}_{\{M_{K,\sigma}\ge M_K\}}.
\end{align*}

Similarly, we expand the square in $J_{91}$ such that 
$J_{91}=J_{911}+J_{912}+J_{913}$, where
\begin{align*}
  J_{911} &= \sum_{i=1}^n\sum_{\sigma\in\E }
	\tau_\sigma p_\sigma^2f(M_{K,\sigma})^2(D_{\sigma}(\sqrt{u_i}))^2
	\mathbf{1}_{\{M_{K,\sigma} < M_K\}}, \\
	J_{912} &= 2\sum_{i=1}^n\sum_{\substack{\sigma\in\E \\ K=K_\sigma}}
	\tau_\sigma p_\sigma^2 \sqrt{u_{i,K}}D_{K,\sigma}(\sqrt{u_i})f'(M_\sigma)f(M_{K,\sigma})
	D_{K,\sigma}(M)\mathbf{1}_{\{M_{K,\sigma} < M_K\}}, \\
  J_{913} &= \sum_{i=1}^n\sum_{\sigma\in\E }
	\tau_\sigma p_\sigma^2 u_{i,K}f'(M_{\sigma})^2(D_{\sigma}M)^2
	\mathbf{1}_{\{M_{K,\sigma} < M_K\}}.
\end{align*}
Arguing as for the expressions $J_{901}$ and $J_{902}$, we obtain $J_{912}\ge 0$ and
$$
  J_{911} = \frac12 \sum_{i=1}^n\sum_{\sigma\in\E}
	\tau_\sigma p(M_{K,\sigma})q(M_{K,\sigma})(D_{\sigma}(\sqrt{u_i}))^2
	\mathbf{1}_{\{M_{K,\sigma} < M_K\}}.
$$
The terms in $J_{913}$ are studied as before for the cases
$0\le M_\sigma\le 1-\delta$ and $M_\sigma>1-\delta$. Similar computations
lead to the existence of a constant $C_6>0$ such that
\begin{align*}
  J_{913} &\ge C_6\sum_{\sigma\in\E }\tau_\sigma
	\frac{M_\sigma^{a-1}}{(1-M_\sigma)^{1+b+\kappa}}
	(D_\sigma M)^2\mathbf{1}_{\{M_{K,\sigma} < M_K\}}.
\end{align*}
We put together the estimates for $J_{901}$ and $J_{911}$,
\begin{equation}\label{J91J101}
  J_{901}+J_{911} \ge \frac12 \sum_{\sigma\in\E }
	\tau_\sigma \min\big\{p(M_K)q(M_K), p(M_{K,\sigma})q(M_{K,\sigma})\big\}
	(D_\sigma\sqrt{u_i})^2.
\end{equation}
and add $J_{903}$ and $J_{913}$,  
\begin{align}\label{J93J103}
  J_{903}+J_{913} &\ge \min\{C_5,C_6\}
	\sum_{\sigma\in\E }\tau_\sigma
	\frac{M_\sigma^{a-1}}{(1-M_{\sigma})^{1+b+\kappa}}
	(D_\sigma M)^2.
\end{align}
Note that $J_{902}+J_{912}\ge 0$. Then
$I \ge (J_{901} + J_{911}) + (J_{903}+J_{913})$ and inserting estimates
\eqref{J91J101} and \eqref{J93J103}, we finish the proof.
\end{proof}


\section{Convergence of solutions}\label{sec.convsol}

We wish to prove Theorem \ref{thm.conv}. Before proving the convergence of the
scheme, we show some compactness properties for the solutions of scheme \eqref{sch.ic}-\eqref{2.psigma}.

\subsection{Compactness properties}

Applying Theorem 3.9 in \cite{ACM17}, we obtain the following result.

\begin{prpstn}[Almost everywhere convergence]\label{prop.conv}
Let the assumptions of Theorem \ref{thm.conv} hold and
let $(u_\eta)_{\eta>0}$ be a family of discrete solutions
to scheme \eqref{sch.ic}-\eqref{2.psigma} constructed in Theorem \ref{thm.ex}.
Then there exists a subsequence of $(u_\eta)_{\eta>0}$, which is not relabeled,
and a function $u=(u_1,\ldots,u_n)\in L^\infty(Q_T)^n$ such that, as $\eta\to 0$,
$$
  u_{i,\eta}\to u_i\ge 0\quad\mbox{a.e. in }Q_T,\ i=1,\ldots,n.
$$
Moreover, there exists $M\in L^\infty(Q_T)$ such that
$$
  M_\eta = \sum_{i=1}^n u_{i,\eta} \to M = \sum_{i=1}^n u_i < 1
	\quad\mbox{a.e. in }Q_T.
$$
\end{prpstn}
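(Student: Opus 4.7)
The plan is to invoke the discrete nonlinear time compactness theorem of Andreianov, Canc\`es, and Moussa (Theorem~3.9 in \cite{ACM17}), applied to the family $(u_{i,\eta})_{\eta>0}$ component by component. That theorem is tailored to degenerate parabolic problems where one does not control the discrete gradient of the unknown itself, but only of a nonlinear transformation of it, and where the time compactness is available only in a very weak, distributional, sense.

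The three ingredients needed are already in place. The uniform $L^\infty$ bound $0\le u_{i,\eta}\le M_\eta\le M^*<1$ a.e.\ in $Q_T$ is part of Theorem~\ref{thm.ex}. The uniform discrete $L^2(0,T;H^1)$ estimate on the nonlinear quantity $u_{i,\eta}\,q(M_\eta)/p(M_\eta)$ is provided by Lemma~\ref{lem.grad}; crucially, the strict upper bound $M_\eta\le M^*<1$ ensures that both $p$ and $q/p$ are continuous nondegenerate functions of $M$ on $[0,M^*]$, so $u\mapsto uq(M)/p(M)$ plays the role of the nonlinear change of unknown required in \cite[Theorem~3.9]{ACM17}. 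The weak discrete time estimate tested against smooth test functions, which is the standard substitute for an $L^2(0,T;H^{-1})$ bound on $\partial_t u$ in the Aubin--Lions framework of \cite{ACM17}, is exactly the content of Lemma~\ref{lem.time}.

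With these three ingredients, I would apply \cite[Theorem~3.9]{ACM17} to each index $i\in\{1,\ldots,n\}$ and extract, via a diagonal argument, a common subsequence along which $u_{i,\eta}\to u_i$ a.e.\ in $Q_T$ for some $u_i\in L^\infty(Q_T)$. Nonnegativity of the limit is preserved by pointwise passage in $u_{i,\eta}\ge 0$. Summing the $n$ almost-everywhere convergences yields $M_\eta\to M:=\sum_{i=1}^n u_i$ a.e.\ in $Q_T$, and the uniform bound $M_\eta\le M^*$ is preserved in the limit, giving $M\le M^*<1$ a.e.; this is exactly the strict inequality $M<1$ asserted in the proposition.

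The main obstacle, conceptually, is that Lemma~\ref{lem.grad} only controls a nonlinear function of $u_i$ rather than $u_i$ itself, so a generic discrete Aubin--Lions lemma is not directly applicable: the technical content of \cite[Theorem~3.9]{ACM17} is precisely to bridge this gap, under the assumption that the nonlinear transformation is continuous and monotone in a suitable sense. It is the strict upper bound $M^*<1$ supplied by the discrete maximum principle of Theorem~\ref{thm.ex} that prevents $p$ from degenerating, keeps the transformation $u\mapsto uq(M)/p(M)$ well-behaved, and thus makes the hypotheses of \cite[Theorem~3.9]{ACM17} verifiable in our setting.
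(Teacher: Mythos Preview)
Your proposal is correct and follows essentially the same route as the paper: both invoke \cite[Theorem~3.9]{ACM17} with the uniform $L^\infty$ bound from Theorem~\ref{thm.ex} and the weak time estimate of Lemma~\ref{lem.time}, then pass to the limit componentwise and sum to recover $M<1$. The paper's own proof is terser---it checks off the structural hypotheses (A$_{\rm x}$1), (A$_{\rm x}$3), (A$_{\rm t}$) and the quantitative hypotheses (a)--(c) of that theorem in a few lines without dwelling on the role of Lemma~\ref{lem.grad} or the nonlinear transformation---but the substance is identical.
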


\begin{proof}
Assumptions (A$_{\rm x}$1) and (A$_{\rm x}$3) in \cite[Theorem 3.9]{ACM17}
are satisfied due to the choice of finite volumes. Assumption (A$_{\rm t}$)
is always fulfilled for one-step methods like the implicit Euler discretization.
Assumptions (a) and (b) are a consequence of the $L^\infty$ bound, 
while Lemma \ref{lem.time} ensures assumption (c). Thus, the result follows
directly from \cite[Theorem 3.9]{ACM17}.
\end{proof}

The gradient estimate in Lemma \ref{lem.grad} shows that the discrete gradient
of $u_{i,\eta} q(M_\eta)/p(M_\eta)$ converges weakly in $L^2(Q_T)$ (up to a subsequence)
to some function. The following lemma shows that the limit can be identified with
$\na(u_iq(M)/p(M))$.

\begin{lem}[Convergence of the gradient]\label{lem.convgrad}
Let the assumptions of Theorem \ref{thm.conv} hold and
let $(u_\eta)_{\eta>0}$ be a family of discrete solutions
to scheme \eqref{sch.ic}-\eqref{2.psigma} constructed in Theorem \ref{thm.ex}.
Then, up to a subsequence, as $\eta\to 0$,
$$
  \na^\eta\bigg(\frac{u_{i,\eta}q(M_\eta)}{p(M_\eta)}\bigg)
	\rightharpoonup \na\bigg(\frac{u_iq(M)}{p(M)}\bigg)
	\quad\mbox{weakly in }L^2(Q_T),
$$
where $u_i$ and $M$ are the limit functions obtained in Proposition \ref{prop.conv}.
\end{lem}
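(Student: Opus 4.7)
The plan is to first extract a weak $L^2$ limit of the discrete gradients via Lemma \ref{lem.grad}, then upgrade the convergence of $v_{i,\eta}:=u_{i,\eta}q(M_\eta)/p(M_\eta)$ itself to strong $L^2(Q_T)$ using Proposition \ref{prop.conv}, and finally identify the weak limit with $\nabla v_i$ by a discrete integration by parts against smooth test functions.

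\textbf{Weak compactness and strong convergence of $v_{i,\eta}$.} From Lemma \ref{lem.grad} together with \eqref{2.para} and \eqref{2.regmesh}, the definition of $\nabla^\eta$ on the dual diamond mesh yields
\[
  \|\nabla^\eta v_{i,\eta}\|_{L^2(Q_T)}^2 \le C\sum_{k=1}^{N_T}\Delta t\,\bigg|\frac{u_{i,\M}^k q(M_\M^k)}{p(M_\M^k)}\bigg|_{1,2,\M}^2 \le C.
\]
Thus, up to extracting a subsequence, $\nabla^\eta v_{i,\eta}\rightharpoonup \chi_i$ weakly in $L^2(Q_T)^2$ for some $\chi_i$. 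Simultaneously, Proposition \ref{prop.conv} gives $u_{i,\eta}\to u_i$ and $M_\eta\to M$ a.e. in $Q_T$, together with the uniform upper bound $M_\eta\le M^*<1$ from Theorem \ref{thm.ex}. Since $q/p$ is continuous on the compact interval $[0,M^*]$ and $v_{i,\eta}$ is uniformly bounded, dominated convergence gives $v_{i,\eta}\to v_i:=u_iq(M)/p(M)$ strongly in $L^r(Q_T)$ for every $r\in[1,\infty)$.

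\textbf{Identification of the limit.} For $\phi\in C_c^\infty(Q_T;\R^2)$, the plan is to use the definition of $\nabla^\eta$ on the diamond cells $T_{K,\sigma}$ and a discrete summation by parts to rewrite
\[
  \int_{Q_T}\nabla^\eta v_{i,\eta}\cdot\phi\,dx\,dt = -\sum_{k=1}^{N_T}\Delta t\sum_{K\in\T}v_{i,K}^k\bigg(\sum_{\sigma\in\E_K}\m(\sigma)\,\overline{\phi}_{K,\sigma}^k\bigg) + o(1),
\]
where $\overline{\phi}_{K,\sigma}^k$ denotes an edge-averaged value of $\phi(\cdot,t_k)\cdot\nu_{K,\sigma}$ with $\nu_{K,\sigma}$ the outward normal from $K$, and the $o(1)$ remainder collects contributions from boundary edges, which vanish since $\phi$ has compact support in $Q_T$. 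By Taylor expansion of $\phi$ around $x_K$ and the mesh regularity \eqref{2.regmesh}, the inner parenthesis approximates $\int_K\operatorname{div}\phi(\cdot,t_k)\,dx$ with error of order $\Delta x\,\|\nabla^2\phi\|_{L^\infty}\,\m(K)$, so the associated piecewise-constant reconstruction converges uniformly to $\operatorname{div}\phi$ on $Q_T$. Passing to the limit, weak convergence on the left and strong $L^2$ convergence of $v_{i,\eta}$ on the right yield
\[
  \int_{Q_T}\chi_i\cdot\phi\,dx\,dt = -\int_{Q_T}v_i\,\operatorname{div}\phi\,dx\,dt \quad\text{for all }\phi\in C_c^\infty(Q_T;\R^2),
\]
so $\chi_i=\nabla v_i$ in the distributional sense, and since $\chi_i\in L^2(Q_T)^2$, in $L^2(Q_T)^2$.

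The main obstacle is the careful bookkeeping in the discrete summation by parts and the consistency estimate that recovers the cell-based divergence of $\phi$ from the edge averages $\overline{\phi}_{K,\sigma}^k$. This is a well-known manipulation for admissible TPFA meshes (see, e.g., \cite{EGH00,CLP03}) and is enabled by the orthogonality of the admissible mesh combined with \eqref{2.regmesh}; the Dirichlet boundary causes no additional difficulty here because the test functions are compactly supported in $Q_T$ (the boundary condition is recovered separately through the bound on $\|v_{i,\eta}-v_i^D\|_{1,2,\M}$).
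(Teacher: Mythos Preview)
Your proposal is correct and follows the same approach as the paper: the paper's own proof is a one-line citation of \cite[Lemma~4.4]{CLP03}, invoking exactly the a.e.\ convergence from Proposition~\ref{prop.conv}, and what you have written is essentially a careful unpacking of that cited argument (uniform $L^2$ bound on $\nabla^\eta v_{i,\eta}$ from Lemma~\ref{lem.grad}, strong convergence of $v_{i,\eta}$ via the bound $M_\eta\le M^*<1$ and dominated convergence, and identification of the weak limit by discrete integration by parts on the diamond mesh). No substantive difference.
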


\begin{proof}
This result follows from the proof of~\cite[Lemma 4.4]{CLP03} since
Proposition \ref{prop.conv} guarantees the a.e.\ convergence of
$u_{i,\eta}q(M_\eta)/p(M_\eta)$ to $u_iq(M)/p(M)$.
\end{proof}

Finally, we verify that the limit function $u$ satisfies the Dirichlet
boundary condition in a weak sense.

\begin{lem}[Convergence of the traces]
Let the assumptions of Theorem \ref{thm.conv} hold and
let $(u_\eta)_{\eta>0}$ be a family of discrete solutions
to scheme \eqref{sch.ic}-\eqref{2.psigma} constructed in Theorem \ref{thm.ex}
such that $u_\eta\to u$ and $M_\eta\to M$ a.e.\ in $Q_T$ as $\eta\to 0$. Then
$$
  \frac{u_i q(M)}{p(M)} - \frac{u_i^Dq(M^D)}{p(M^D)}\in L^2(0,T;H_D^1(\Omega)).
$$
\end{lem}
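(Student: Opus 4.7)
The plan is to show that the piecewise constant functions $w_{i,\eta} := u_{i,\eta} q(M_\eta)/p(M_\eta) - u_i^D q(M^D)/p(M^D)$, which vanish on the Dirichlet boundary edges by construction of the scheme (because $u_{i,\sigma}^k = u_i^D$ for $\sigma \in \E_{\rm ext}^D$), converge up to a subsequence to the function $w_i := u_i q(M)/p(M) - u_i^D q(M^D)/p(M^D)$ in a topology strong enough to transmit the vanishing Dirichlet trace. Everything will follow from a conforming reconstruction argument, once one has gathered the natural bounds from the previous lemmas.

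First I would establish that $w_i \in L^2(0,T;H^1(\Omega))$. The uniform bound $M_\eta \le M^* < 1$ from Theorem~\ref{thm.ex} and the continuity of $M \mapsto q(M)/p(M)$ on $[0,M^*]$ imply that $u_{i,\eta}q(M_\eta)/p(M_\eta)$ is uniformly bounded in $L^\infty(Q_T)$, so by Proposition~\ref{prop.conv} and dominated convergence we get $w_{i,\eta} \to w_i$ in $L^p(Q_T)$ for every $p<\infty$. Combined with Lemma~\ref{lem.grad} (uniform $L^2$ bound on $\na^\eta w_{i,\eta}$) and Lemma~\ref{lem.convgrad} (identification of the weak limit with $\nabla(u_iq(M)/p(M))$), this yields $w_i \in L^2(0,T;H^1(\Omega))$.

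Second, to capture the zero Dirichlet trace in the limit, I would introduce an $H^1$-conforming reconstruction $\widehat w_{i,\eta}$ of $w_{i,\eta}$, built by $P^1$ interpolation on a simplicial subdivision of the diamond cells $T_{K,\sigma}$, as in the standard construction for admissible finite-volume meshes (see, e.g., \cite{EGH00,CLP03}). By design, $\widehat w_{i,\eta}$ interpolates the cell values $w_{i,K}^k$ and the zero boundary values on $\sigma \in \E_{\rm ext}^D$, so $\widehat w_{i,\eta} = 0$ on $\Gamma^D$. Using the mesh regularity~\eqref{2.regmesh}, one verifies that $\|\widehat w_{i,\eta} - w_{i,\eta}\|_{L^2(Q_T)} \to 0$ and $\|\na \widehat w_{i,\eta}\|_{L^2(Q_T)} \le C\,\|\na^\eta w_{i,\eta}\|_{L^2(Q_T)}$, uniformly in $\eta$. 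Hence $\widehat w_{i,\eta}$ is bounded in $L^2(0,T;H^1(\Omega))$ and converges in $L^2(Q_T)$ to $w_i$; up to a subsequence $\widehat w_{i,\eta} \rightharpoonup w_i$ weakly in $L^2(0,T;H^1(\Omega))$, and by continuity of the trace operator on $H^1(\Omega)$ the trace of $w_i$ on $\Gamma^D$ is the weak limit of the traces of $\widehat w_{i,\eta}$, which is identically zero. This gives $w_i \in L^2(0,T;H_D^1(\Omega))$, as desired.

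The main technical obstacle I expect is the construction and analysis of the conforming reconstruction $\widehat w_{i,\eta}$: proving the $L^2$ closeness to $w_{i,\eta}$ and the comparability of $\|\na \widehat w_{i,\eta}\|_{L^2}$ with the discrete gradient norm requires a careful cell-by-cell estimate using the mesh regularity constant $\xi$ from~\eqref{2.regmesh}. Everything else is a straightforward packaging of the a priori estimates and convergence results already established in Sections~\ref{sec.apriori} and~\ref{sec.convsol}.
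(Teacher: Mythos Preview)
Your argument is correct, but it differs from the paper's proof in the mechanism used to pass the Dirichlet condition to the limit. The paper does not build a conforming reconstruction. Instead it sets $v_{i,\eta}=u_{i,\eta}q(M_\eta)/p(M_\eta)$ and invokes a discrete trace convergence result from \cite{BCH13} (Lemmas~4.7 and~4.8 there): the gradient estimate of Lemma~\ref{lem.grad} together with the uniform $L^\infty$ bound yield, up to a subsequence, $v_{i,\eta}\to v_i=u_iq(M)/p(M)$ strongly in $L^p(\Gamma^D\times(0,T))$ for all $p<\infty$. Since by construction \eqref{reconstrucbord} one has $v_{i,\eta}\equiv u_i^Dq(M^D)/p(M^D)$ on $\Gamma^D\times(0,T)$, the limit inherits the same constant boundary value, and the conclusion follows.

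Your route via a conforming $P^1$ interpolant $\widehat w_{i,\eta}$ on a simplicial refinement of the diamond mesh is more self-contained: it avoids citing an external trace-convergence lemma and reduces the question to the weak closedness of $L^2(0,T;H_D^1(\Omega))$ in $L^2(0,T;H^1(\Omega))$. The price is the cell-by-cell interpolation estimates you flag (closeness in $L^2$ and comparability of gradients under the mesh regularity~\eqref{2.regmesh}), which are standard but require some work. The paper's route is shorter precisely because that work is outsourced to \cite{BCH13}. Both proofs rest on the same analytic inputs: the bound $M_\eta\le M^*<1$ from Theorem~\ref{thm.ex}, the gradient estimate of Lemma~\ref{lem.grad}, and the a.e.\ convergence from Proposition~\ref{prop.conv}.
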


\begin{proof}
Let us define $v_{i,\eta} := u_{i,\eta}q(M_{\eta})/p(M_{\eta})$ for $i=1,\ldots,n$.
Then, using~\cite[Lemma 4.7]{BCH13} and~\cite[Lemma 4.8]{BCH13}, we can prove, thanks to Lemma~\ref{lem.grad} and the $L^{\infty}$-estimate, that up to a subsequence, for all $1 \leq p <+\infty$ as $\eta \rightarrow 0$,
$$
v_{i,\eta} \rightarrow v_i = \frac{u_i \, q(M)}{p(M)} \quad \mbox{strongly in} \, \, L^p(\Gamma^D \times (0,T)), \, \, i=1,\ldots,n,
$$
see for instance the proof of~\cite[Proposition 4.9]{BCH13}. Then, up to a subsequence,
\begin{equation}\label{convtrace}
v_{i,\eta} \rightarrow v_i \quad \mbox{a.e. in} \, \, \Gamma^D \times (0,T), \, \, i=1,\ldots,n.
\end{equation}
Moreover, by construction~\eqref{reconstruc}-\eqref{reconstrucbord},
$$
v_{i,\eta}(x,t) = \frac{u^D_i \, q(M^D)}{p(M^D)} \quad \mbox{for} \, \, (x,t) \in \Gamma^D \times (0,T), \, \, i=1,\ldots,n.
$$
Thus, we deduce from~\eqref{convtrace} that
$$
v_i = \frac{u^D_i \, q(M^D)}{p(M^D)} \quad \mbox{a.e. in} \, \, \Gamma^D \times (0,T), \, \, i=1,\ldots,n,
$$
which concludes the proof.
\end{proof}


\section{Convergence of the scheme}\label{sec.convsch}

We prove in this section that, under the assumptions of Theorem \ref{thm.conv},
the limit function $u=(u_1,\ldots,u_n)$ obtained in Proposition \ref{prop.conv}
is a weak solution to \eqref{1.eq}-\eqref{1.bc}. For this, we follow some
ideas developed in \cite{CCGJ19,CLP03}.

Let $\phi\in C_0^\infty(\Omega \times [0,T))$ and choose 
$\eta=\max\{\Delta x,\Delta t\}$ sufficiently small such that
$\operatorname{supp}(\phi)\subset\{x\in\Omega:\dist(x,\pa\Omega)>\eta\}\times[0,T)$.
In particular, $\phi$ vanishes in any cell $K\in\T$ with $K\cap\pa\Omega\neq\emptyset$.
Again, we abbreviate $\phi_K^k=\phi(x_K,t_k)$ and we fix
$i\in\{1,\ldots,n\}$. Let
\begin{align*}
  & \eps(\eta) = F_{10}^\eta + F_{20}^\eta, \quad\mbox{where} \\
	& F_{10}^\eta = -\int_0^T\int_\Omega u_{i,\eta}\pa_t\phi dxdt
	- \int_\Omega u_{i,\eta}(x,0)\phi(x,0)dx, \\
	& F_{20}^\eta = \int_0^T\int_\Omega p(M_\eta)^2\na^\eta\bigg(
	\frac{u_{i,\eta}q(M_\eta)}{p(M_\eta)}\bigg)\cdot\na\phi dxdt.
\end{align*}
Proposition \ref{prop.conv} and Lemma \ref{lem.convgrad} allow us to perform 
the limit $\eta\to 0$ in these integrals, leading to
\begin{align*}
  \lim_{\eta\to 0}\eps(\eta)
	&= -\int_0^T\int_\Omega u_i\pa_t\phi dxdt - \int_\Omega u_i(x,0)\phi(x,0)dx \\
  &\phantom{xx}{}
	+ \int_0^T\int_\Omega p(M)^2\na\bigg(\frac{u_iq(M)}{p(M)}\bigg)\cdot
	\na\phi dxdt.
\end{align*}
Therefore, it remains to prove that $\eps(\eta)\to 0$ as $\eta\to 0$.

To this end, we multiply \eqref{sch1} by $\Delta t\phi_K^{k-1}$ and sum
over $K\in\T$ and $k=1,\ldots,N_T$, giving
\begin{align*}
  & F_1^\eta + F_2^\eta + F_3^\eta = 0, \quad\mbox{where} \\
	& F_1^\eta = \sum_{k=1}^{N_T}\sum_{K\in\T}\m(K)(u_{i,K}^k-u_{i,K}^{k-1})\phi_K^{k-1}, \\
	& F_2^\eta = \sum_{k=1}^{N_T}\Delta t\sum_{K\in\T}\sum_{\sigma\in\E_{{\rm int},K}}
	\F_{i,K,\sigma}^k\phi_K^{k-1}.
\end{align*}
For the proof of $\eps(\eta)\to 0$ as $\eta\to 0$, it is sufficient to show that
$F_{j0}^\eta-F_j^\eta\to 0$ as $\eta\to 0$ for $j=1,2$.

The arguments in \cite[Section 5.2]{CCGJ19} show that
$$
  |F_{10}^\eta-F_1^\eta| \le CT\m(\Omega)\|\phi\|_{C^1(\overline{Q_T})} \, \eta\to 0
	\quad\mbox{as }\eta\to 0.
$$
The remaining convergence for $|F_{20}^\eta-F_2^\eta|$ is more involved.
First, we rewrite $F_2^\eta$. By the conservation of the numerical fluxes $\F_{i,K,\sigma}+\F_{i,L,\sigma} = 0$ for all the edges $\sigma = K|L \in \E_{{\rm int}}$ and the definition of $\F_{i,K,\sigma}^k$, we infer that
\begin{align*}
  F_2^\eta &= -\sum_{k=1}^{N_T}\Delta t\sum_{K\in\T}
	\sum_{\sigma\in\E_{{\rm int},K}}\F_{i,K,\sigma}^k D_{K,\sigma}\phi^{k-1} \\
	&= \sum_{k=1}^{N_T}\Delta t\sum_{K\in\T}
	p(M_K^k)^2\sum_{\sigma\in\E_{{\rm int},K}}\tau_\sigma
	D_{K,\sigma}\bigg(\frac{u_i^kq(M^k)}{p(M^k)}\bigg) D_{K,\sigma}\phi^{k-1} \\
	&\phantom{xx}+ \sum_{k=1}^{N_T}\Delta t\sum_{K\in\T}
	\sum_{\sigma\in\E_{{\rm int},K}}\tau_\sigma\big((p_\sigma^k)^2-p(M_K^k)^2\big)
	D_{K,\sigma}\bigg(\frac{u_i^kq(M^k)}{p(M^k)}\bigg) D_{K,\sigma}\phi^{k-1} \\
  &=: F_{21}^\eta + F_{22}^\eta.
\end{align*}
Inserting the definition of the discrete gradient $\na^\eta=\na^{\DD^\eta}$,
we can reformulate $F_{20}^\eta$ as 
\begin{align*}
  F_{20}^\eta = \sum_{k=1}^{N_T}\sum_{K\in\T}p(M_K^k)^2
	\sum_{\sigma\in\E_{{\rm int},K}}D_{K,\sigma}\bigg(\frac{u_i^kq(M^k)}{p(M^k)}\bigg)
	\frac{\m(\sigma)}{\m(T_{K,\sigma})}\int_{t_{k-1}}^{t_k}\int_{T_{K,\sigma}}
	\na\phi\cdot\nu_{K,\sigma} dxdt.
\end{align*}
Thus, using the monotonicity of $p$, we have
\begin{align*}
  |F_{20}^\eta-F_{21}^\eta| 
	&\le p(0)^2\sum_{k=1}^{N_T}\sum_{K\in\T}
	\sum_{\sigma\in\E_{{\rm int},K}}\m(\sigma)
	D_\sigma\bigg(\frac{u_i^kq(M^k)}{p(M^k)}\bigg) \\
	&\phantom{xx}{}\times
	\bigg|\int_{t_{k-1}}^{t_k}\bigg(\frac{D_{K,\sigma}\phi^k}{\dist_\sigma}
	- \frac{1}{\m(T_{K,\sigma})}\int_{T_{K,\sigma}}\na\phi\cdot\nu_{K,\sigma}dx
	\bigg)dt\bigg|.
\end{align*}
In view of the proof of Theorem 5.1 in \cite{CLP03}, there exists a constant 
$C_{\rm cons}>0$ such that
\begin{equation*}
  \bigg|\int_{t_{k-1}}^{t_k}\bigg(\frac{D_{K,\sigma}\phi^k}{\dist_\sigma}
	- \frac{1}{\m(T_{K,\sigma})}\int_{T_{K,\sigma}}\na\phi\cdot\nu_{K,\sigma} dx\bigg)
	dt\bigg| \le C_{\rm cons}\Delta t\eta.
\end{equation*}
Applying this inequality and the Cauchy-Schwarz inequality, we obtain
\begin{align*}
	|F_{20}^\eta-F_{21}^\eta| \le p(0)^2C_{\rm cons}\eta
	\bigg(\sum_{k=1}^{N_T}\Delta t\sum_{\sigma\in\E}\m(\sigma)\dist_\sigma\bigg)^{1/2}
	\bigg(\sum_{k=1}^{N_T}\Delta t\bigg|\frac{u_i^kq(M^k)}{p(M^k)}\bigg|_{1,2,\M}^2
	\bigg)^{1/2}.
\end{align*}
It remains to use the mesh regularity \eqref{2.regmesh}, property \eqref{2.para},
and the gradient estimate given by Lemma~\ref{lem.grad} to conclude that, for some constant $C>0$,
\begin{equation}\label{F20F21}
  |F_{20}^\eta-F_{21}^\eta| \le C(\xi,C_3)p(0)^2\eta\to 0 \quad\mbox{as }\eta\to 0.
\end{equation}

We turn to the estimate of $F_{22}^\eta$. To this end, we use the definition of 
$(p_\sigma^k)^2$ to rewrite $F_{22}^\eta$ as $F_{22}^\eta=F_{220}^\eta+F_{221}^\eta$,
where
\begin{align*}
F_{220}^{\eta} &= \sum_{k=1}^{N_T}\Delta t\sum_{K\in\T}
	\sum_{\sigma\in\E_{{\rm int},K}}\tau_\sigma\, \frac{p(M_{K,\sigma}^k)^2-p(M_K^k)^2}{2}\,
	D_{K,\sigma}\bigg(\frac{u_i^kq(M^k)}{p(M^k)}\bigg) D_{K,\sigma}\phi^{k-1} \mathbf{1}_{\{M_K^k>M_{K,\sigma}^k\}},\\
F_{221}^\eta &= \sum_{k=1}^{N_T}\Delta t\sum_{K\in\T}
	\sum_{\sigma\in\E_{{\rm int},K}}\tau_\sigma \frac{p(M_{K,\sigma}^k)^2-p(M_K^k)^2}{2}
	D_{K,\sigma}\bigg(\frac{u_i^kq(M^k)}{p(M^k)}\bigg) D_{K,\sigma}\phi^{k-1}\mathbf{1}_{\{M_K^k\leq M_{K,\sigma}^k\}}.
\end{align*}
It follows from $p(M_K^k)\le p(M_{K,\sigma}^k)$ and the inequality $x^2-y^2\le 2x(x-y)$ 
that
\begin{align*}
  |F_{220}^\eta| &\le 2\eta\|\phi\|_{C^1(\overline{Q_T})}
	\sum_{k=1}^{N_T}\Delta t\sum_{K\in\T}
	\sum_{\sigma\in\E_{{\rm int},K}}\tau_\sigma \\
	&\phantom{xx}{}\times\bigg|\frac{p(M_{K,\sigma}^k)^2-p(M_K^k)^2}{2} \,
	\sqrt{\frac{u_{i,K,\sigma}^kq(M_{K,\sigma}^k)}{p(M_{K,\sigma}^k)}}
	D_{K,\sigma}\bigg(\sqrt{\frac{u_i^kq(M^k)}{p(M^k)}}\bigg)
	\mathbf{1}_{\{M_K^k>M_{K,\sigma}^k\}}\bigg|. 
\end{align*}
A Taylor expansion, for $\widetilde M_\sigma^k=\tilde{\theta}_\sigma M_K^k
+ (1-\tilde{\theta}_\sigma)M_{K,\sigma}^k$ for some $\tilde{\theta}_{\sigma} \in (0,1)$,
$$
  p(M_{K,\sigma}^k)^2-p(M_K^k)^2 = 2p'(\widetilde M_\sigma^k)p(\widetilde M_\sigma^k)
	(M_{K,\sigma}^k-M_K^k),
$$
and the Cauchy-Schwarz inequality give
\begin{align}
  |F_{220}^\eta| &\le 2\eta\|\phi\|_{C^1(\overline{Q_T})}F_{2200}^\eta F_{2201}^\eta,
  \quad\mbox{where} \label{F22} \\
  F_{2200}^\eta &= p(0)\bigg\{\sum_{k=1}^{N_T}\Delta t\sum_{\sigma\in\E}\tau_\sigma
	\bigg(D_\sigma\bigg(\sqrt{\frac{u_i^kq(M^k)}{p(M^k)}}\bigg)
	\bigg)^2\bigg\}^{1/2}, \nonumber \\
  F_{2201}^\eta &= \bigg\{\sum_{k=1}^{N_T}\Delta t\sum_{\sigma\in\E}\tau_\sigma
	p'(\widetilde M_\sigma^k)^2\frac{u_{i,K,\sigma}^kq(M_{K,\sigma}^k)}{p(M_{K,\sigma}^k)}
	(D_\sigma M)^2\mathbf{1}_{\{M_K^k>M_{K,\sigma}^k\}}\bigg\}^{1/2}. \nonumber
\end{align}
Inequality \eqref{4.aux} shows that
$F_{2200}^\eta\le p(0) H(u_\M^0)^{1/2}/p(M^*)$.

For the estimate of $F_{2201}^\eta$, we use $u_{i,K,\sigma}^k\le 1$ and
$C_7:=\sup_{0\le x\le M^*}p'(x)^2/p(x)<\infty$ (this is finite since $M^*<1$)
to infer that
\begin{align*}
  F_{2201}^\eta
	&\le C_7\bigg\{\sum_{k=1}^{N_T}\Delta t\sum_{\sigma\in\E}\tau_\sigma
	q(M_{K,\sigma}^k)(D_\sigma M)^2
	\mathbf{1}_{\{M_K^k>M_{K,\sigma}^k\}}\bigg\}^{1/2} \\
  &= C_7\bigg\{\sum_{k=1}^{N_T}\Delta t\sum_{\sigma\in\E}\tau_\sigma
	(M_{K,\sigma}^k)^{1-a}(1-M_{K,\sigma}^k)^{1+b+\kappa}
	q(M_{K,\sigma}^k) \\
  &\phantom{xx}{}\times\frac{(M_{K,\sigma}^k)^{a-1}}{(1-M_{K,\sigma}^k)^{1+b+\kappa}}
	(D_\sigma M)^2\mathbf{1}_{\{M_K^k>M_{K,\sigma}^k\}}\bigg\}^{1/2}.
\end{align*}
Set $M_\sigma^k = \theta_\sigma M_K^k + (1-\theta_\sigma)M_{K,\sigma}^k$
as in the proof of Lemma \ref{lem.lower}. Using the inequality $(1-M_{K,\sigma}^k)^{1+b+\kappa}\le 1$ together with the monotonicity of 
$x \mapsto x^{a-1}/(1-x)^{-1-b-\kappa}$, we obtain
$$
  F_{2201}^\eta \le C_7\bigg\{\sum_{k=1}^{N_T}\Delta t\sum_{\sigma\in\E}
	\tau_\sigma(M_{K,\sigma}^k)^{1-a}	q(M_{K,\sigma}^k)\frac{(M_{\sigma}^k)^{a-1}}{(1-M_\sigma^k)^{1+b+\kappa}}
	(D_\sigma M)^2\mathbf{1}_{\{M_K^k>M_{K,\sigma}^k\}}\bigg\}^{1/2}.
$$
By \eqref{3.I} and the bound
$$
(M^k_{K,\sigma})^{1-a} \, q(M^k_{K,\sigma}) \leq \frac{M^*}{(a+1) \, p(M^*)^2 \, (1-M^*)^b} \quad \mbox{for all }\sigma \in \E,
$$
this expression is bounded by the entropy production which
is uniformly bounded due to the entropy inequality. We have shown that $F_{2200}^\eta$ and $F_{2201}^\eta$ are bounded uniformly in
$\eta$ such that \eqref{F22} implies that $F_{220}^\eta\to 0$ as $\eta\to 0$.

Now we rewrite $|F_{221}^\eta|$ as
\begin{align*}
  |F_{221}^\eta| &\le 2\eta\|\phi\|_{C^1(\overline{Q_T})}
	\sum_{k=1}^{N_T}\Delta t\sum_{K\in\T}
	\sum_{\sigma\in\E_{{\rm int},K}}\tau_\sigma
	\Bigg|\frac{p(M_K^k)^2-p(M_{K,\sigma}^k)^2}{2} \,
	\sqrt{\frac{u_{i,K}^kq(M_{K}^k)}{p(M_{K}^k)}} \\
	&\phantom{xx}{}\times\Bigg(\sqrt{\frac{u_{i,K}^kq(M^k_K)}{p(M^k_K)}} - \sqrt{\frac{u_{i,K,\sigma}^kq(M^k_{K,\sigma})}{p(M^k_{K,\sigma})}} \Bigg)
	\mathbf{1}_{\{M_K^k\leq M_{K,\sigma}^k\}}\Bigg|. 
\end{align*}
Arguing as for the term $|F_{220}^\eta|$, we see that $F_{221}^\eta \to 0$ 
as $\eta \to 0$.

The previous convergences and \eqref{F20F21} imply that
$$
  |F_{20}^\eta-F_2^\eta| \le |F_{20}^\eta-F_{21}^\eta| + |F_{22}^\eta|\to 0 \quad\mbox{as }\eta\to 0.
$$
To conclude the proof of Theorem~\ref{thm.conv}, it remains to apply \cite[Theorem 2.3]{DMZ18} which shows the uniqueness of the weak solution to~\eqref{1.eq}-\eqref{1.bc} (in the case $\alpha_i=1$ for $i=1,\ldots,n$) and which implies in particular that the whole sequence $(u_\eta)_{\eta > 0}$ converges to the weak solution.


\section{Numerical experiments}
\label{sec.exp.num}

We present some numerical experiments in one and two space dimensions, 
when the biofilm is composed by $n=2$ different species of bacteria
and the function $p$ satisfies hypothesis (H4) (case 1) or not (case 2).

\subsection{Implementation of the scheme}

The finite-volume scheme \eqref{sch.ic}-\eqref{2.psigma} is implemented in MATLAB. 
Since the numerical scheme is implicit in time, one has to solve a nonlinear system of equations at each time step.
In the one-dimensional case, we use a plain Newton method. Starting from 
$u^{k-1}=(u^{k-1}_1, u^{k-1}_2)$, we apply a Newton method with precision 
$\eps = 10^{-10}$ to approximate the solution to the scheme at time step $k$.
In the two-dimensional case, we use a Newton method complemented by an adaptive 
time step strategy to approximate the solution of the scheme at time $k$. 
More precisely, starting again from $u^{k-1}=(u^{k-1}_1, u^{k-1}_2)$, we launch a 
Newton method. Then, if the method did not converge with precision 
$\eps= 10^{-10}$ after at most $50$ steps, we half the time step and restart the 
Newton method. At the beginning of each time step, we double the previous time 
step. Moreover, we impose the condition $10^{-8}\leq \Delta t_{k-1} \leq 10^{-2}$ with 
an initial time step set to $\Delta t_0 = 10^{-5}$.

\subsection{Test case 1}

We introduce a function $p$ that satisfies hypothesis (H4),
\begin{equation}\label{p.1}
  p(x) = \exp(-1/(1-x)) \quad \mbox{for all }x \in [0,1),
\end{equation}
and we choose $a=b=2$. In this case $\kappa=1$ and
$$
  \lim_{M\to 1} (-(1-M)^2) \frac{p'(M)}{p(M)}=1.
$$
This definition of $p$ allows us to compute explicitly the value of $q(M)/p(M)$:
$$
  \frac{q(M)}{p(M)} = \frac{1}{M}\bigg(e^{2/(1-M)}\bigg(M-\frac12\bigg)
  + \frac{e^2}{2}\bigg).
$$

We consider a one-dimensional test case on $\Omega = (0,1)$ with $\Gamma^D = \{0\}$, $\Gamma^N = \{1\}$, $u^D_1 = u^D_2 = 0.1$, and the following initial data:
$$
  u^0_1(x) = u^D_1 + u^D_1 \mathbf{1}_{[0.2,0.5]}(x), \quad 
  u^0_2(x) = u^D_2 + u^D_2 \mathbf{1}_{[0.5,0.8]}(x).
$$
In Figure~\ref{Fig1}, we illustrate the order of convergence in space of the scheme. Since exact solutions to the biofilm model are not explicitly known, we compute a reference solution on a uniform mesh composed of $5120$ cells and with $\Delta t = (1/5120)^2$. We use this rather small value of $\Delta t$
because the Euler discretization in time exhibits a first-order convergence rate, 
while we expect a second-order convergence 
rate in space for scheme \eqref{sch.ic}-\eqref{2.psigma}, due to the 
approximation of $p(M)^2$ in the numerical fluxes. We compute approximate solutions 
on uniform meshes made of respectively $40$, $80$, $160$, $320$, $640$, $1280$, 
and $2560$ cells. Finally, we compute the $L^2$ norm of the difference between the 
approximate solution and the average of the reference solution over $40$, $80$, $160$, $320$, $640$, and $1280$ cells at the final time $T=10^{-3}$. Figure~\ref{Fig1} 
shows the results for $p$ defined in \eqref{p.1} and with different choices of the 
diffusivities $\alpha_1$ and $\alpha_2$. We observe that the scheme converges, even when $\alpha_1 \neq \alpha_2$, with an order around two.

\begin{figure}[!ht]
\begin{center}
\includegraphics[scale=1]{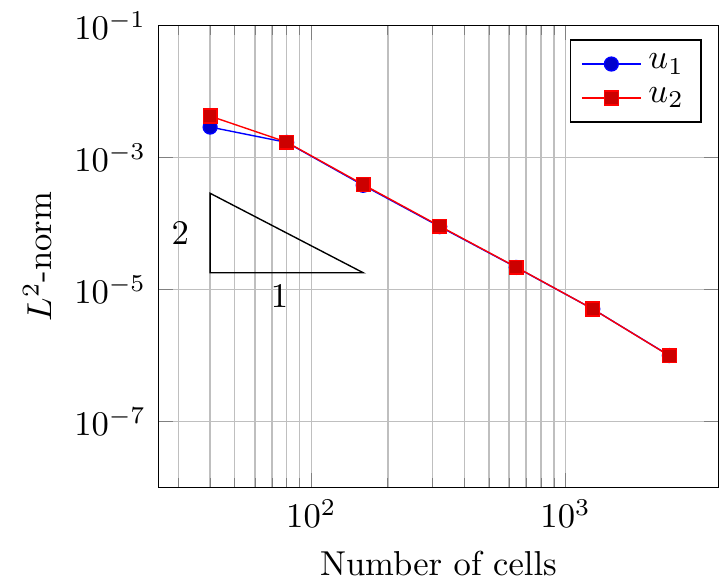}
\includegraphics[scale=1]{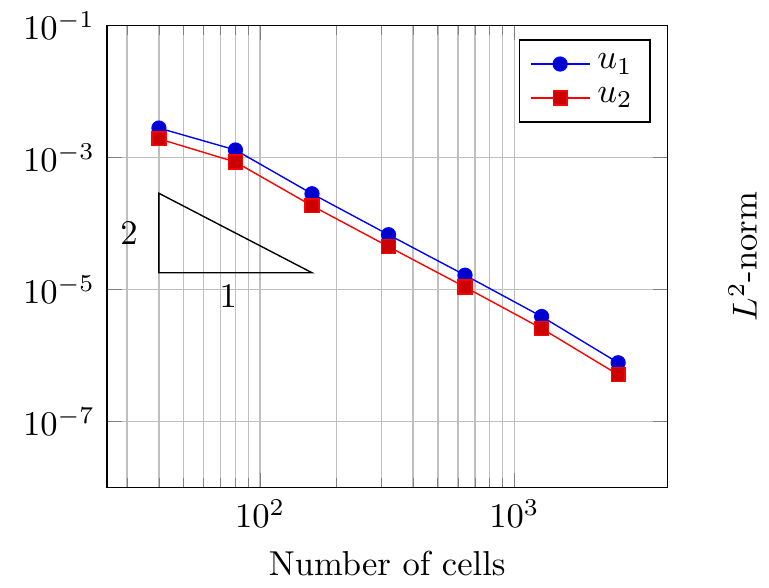}
\caption{$L^2$ norm of the error in space with $\alpha_1 = \alpha_2=1$ (left)
and $\alpha_1=1$ and $\alpha_2 = 10$ (right); $p$ is defined in \eqref{p.1}.}
\label{Fig1}
\end{center}
\end{figure}

Next, we consider a two-dimensional test case on $\Omega = (0,1) \times (0,1)$ 
with $\Gamma^D = \{y=1\}$, $\Gamma^N = \partial \Omega \setminus \Gamma^D$, $u^D_1 = u^D_2 = 0.1$, $\alpha_1=1$, $\alpha_2=5$, and the initial data
$$
  u^0_1(x,y) = u^D_1 + u^D_1 \, \mathbf{1}_{[0.2,0.5]}(x) \mathbf{1}_{[0,0.4]}(y),\quad
  u^0_2(x,y) = u^D_2 + u^D_2 \, \mathbf{1}_{[0.5,0.8]}(x)\mathbf{1}_{[0,0.4]}(y).
$$ 
The mesh of $\Omega = (0,1) \times (0,1)$ is composed of 3584 triangles. 
In Figure~\ref{Fig2}, we show the evolution 
of the biomass $M$ at different times. It is shown in \cite[Theorem 2.2]{DMZ18}
that the steady state is given by $u^{\infty}_1 = u^D_1$ and $u^{\infty}_2 = u^D_2$ 
and that the rate of convergence in the $L^2$ norm is of order $1/t$. 
Figure~\ref{Fig2} (bottom right) 
shows this convergence to the steady state in the $L^2$ norm 
in a semi-logarithmic scale. We remark that the test case used here is close to that 
one used in \cite{DMZ18}. The main difference is the absence of the source term in 
our case. It is worth mentioning that in this case, the rate of convergence of order $1/t$ seems to be sharp, while in~\cite{DMZ18}, the authors observed an exponential 
convergence rate when the source term is given by $u^D_i - u_i$ for $i=1,\ldots,n$.

\begin{figure}[!ht]
\begin{center}
\includegraphics[scale=0.39]{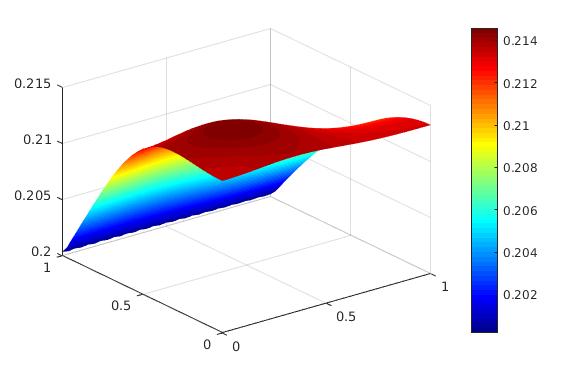}
\includegraphics[scale=0.39]{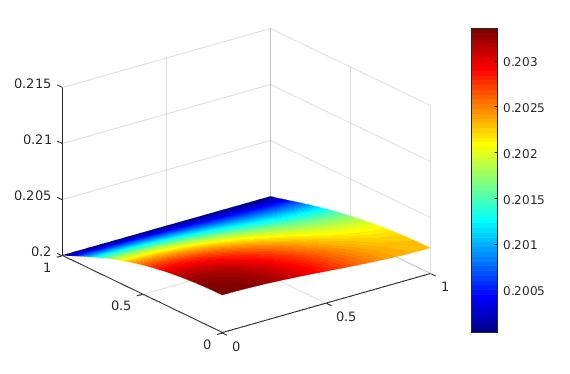}
\includegraphics[scale=0.39]{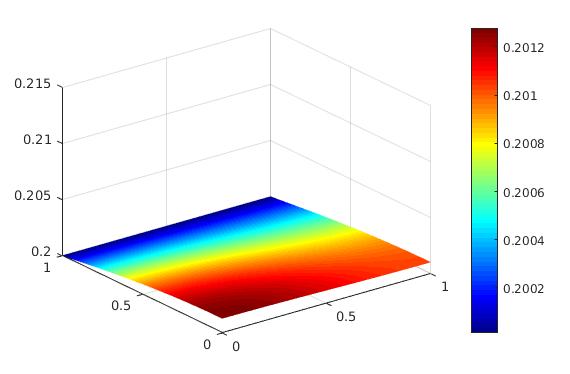}
\includegraphics[scale=0.88]{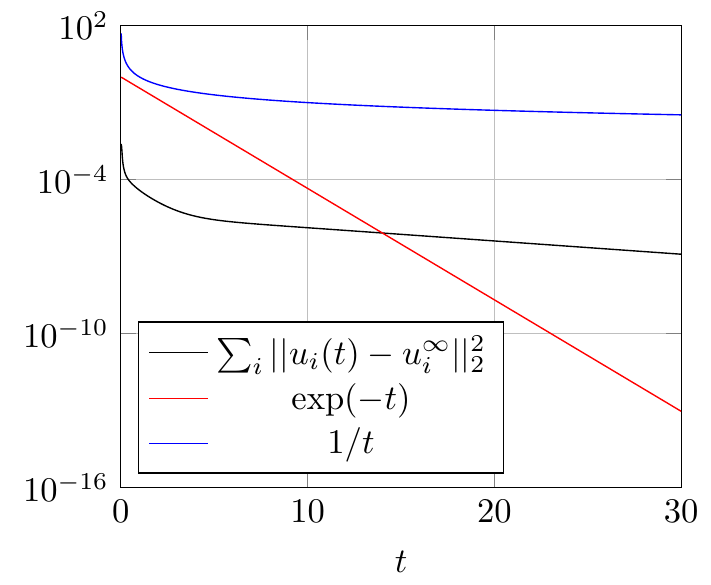}
\end{center}
\caption{Evolution of the biomass $M$ at different times with $p$ defined in \eqref{p.1}. Top left: $t=1$, top right: $t=5$, bottom left: $t=10$.
Bottom right: Convergence of the solutions to the steady states in the $L^2$ norm 
with $p$ defined in \eqref{p.1}.}
\label{Fig2}
\end{figure}



\subsection{Test case 2}

We use a function $p$ that does {\em not} satisfy hypothesis (H4):
\begin{equation}\label{p.2}
  p(x) = 1-x \quad \mbox{for all }x \in [0,1]
\end{equation}
and take $a=b=1$. Also here, we can also compute explicitly $q(M)/p(M)$:
$$
  \frac{q(M)}{p(M)} = \frac{M}{2(1-M)^2}.
$$
As before, we consider first a one-dimensional test case on $\Omega = (0,1)$ 
with $\Gamma^D = \{0\}$, $\Gamma^N = \{1\}$, $u^D_1 = u^D_2 = 0.1$, 
and the initial data
$$
  u^0_1(x) = u^D_1 + u^D_1 \mathbf{1}_{[0.2,0.5]}(x), \quad
  u^0_2(x) = u^D_2 + u^D_2 \mathbf{1}_{[0.5,0.8]}(x).
$$ 
We investigate the $L^2$-convergence rate in space of the scheme for different 
values of $\alpha_1$ and $\alpha_2$; see Figure~\ref{Fig4}. We use the same strategy 
as described in the previous section. In particular, the scheme 
converges with an order around two.

\begin{figure}[!ht]
\begin{center}
\includegraphics[scale=1]{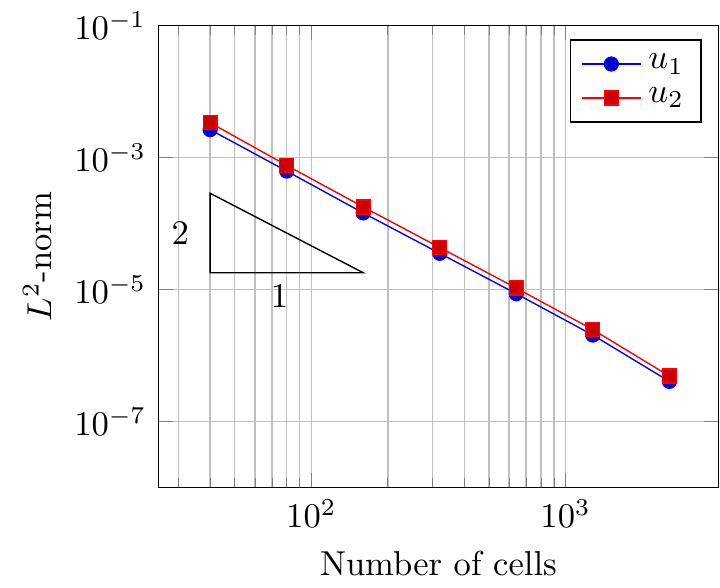}
\includegraphics[scale=1]{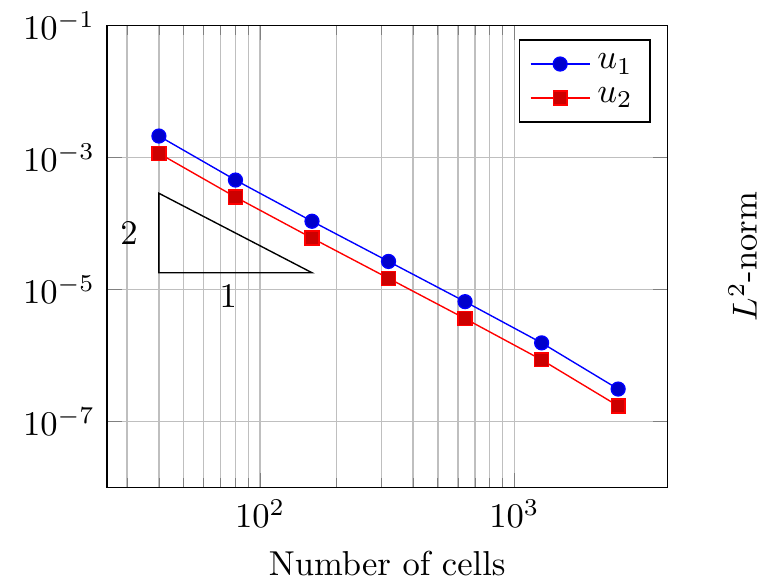}
\caption{$L^2$ norm of the error in space with 
$\alpha_1 = \alpha_2=1$ (left) and $\alpha_1=1$ and $\alpha_2 = 10$ (right); 
$p$ is defined in \eqref{p.2}.}
\label{Fig4}
\end{center}
\end{figure}

Finally, we consider a two-dimensional test case on $\Omega = (0,1) \times (0,1)$ with 
$\Gamma^D = \{y=1\}$, $\Gamma^N = \partial \Omega \setminus \Gamma^D$, 
$u^D_1 = u^D_2 = 0.1$, $\alpha_1=1$, $\alpha_2=5$, and the initial data
$$
  u^0_1(x,y) = u^D_1 + u^D_1 \mathbf{1}_{[0.2,0.5]}(x) \mathbf{1}_{[0,0.4]}(y),\quad
  u^0_2(x,y) = u^D_2 + u^D_2 \mathbf{1}_{[0.5,0.8]}(x)\mathbf{1}_{[0,0.4]}(y).
$$
Again, we choose a mesh of $\Omega = (0,1) \times (0,1)$ consisting of 3584 triangles. 
In Figure~\ref{Fig5}, we show the 
evolution of the biomass $M$ at different times and investigate the rate of
convergence of the solution to the steady state $u^{\infty}_1 = u^D_1$ 
and $u^{\infty}_2 = u^D_2$. We represent the (squared) 
$L^2$ norm of the difference between $u_i$ and $u^\infty_i$ in a semi-logarithmic 
scale with final time $T=30$. Surprisingly, the rate of convergence seems to be
better that the one of order $1/t$ obtained in \cite[Theorem 2.2]{DMZ18}.

\begin{figure}[!ht]
\begin{center}
\includegraphics[scale=0.39]{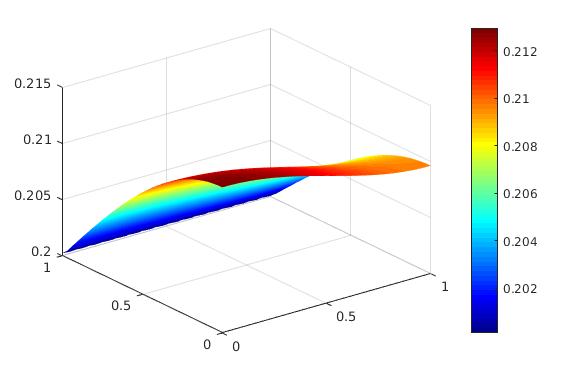}
\includegraphics[scale=0.39]{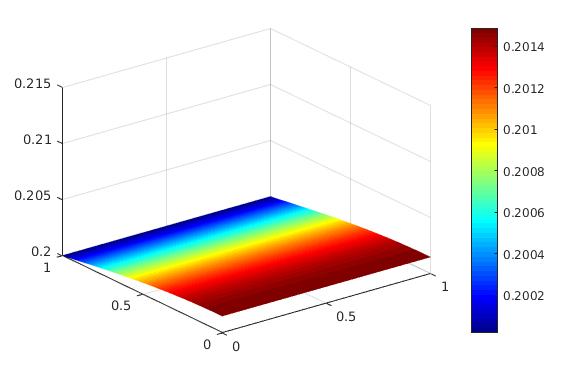}
\includegraphics[scale=0.39]{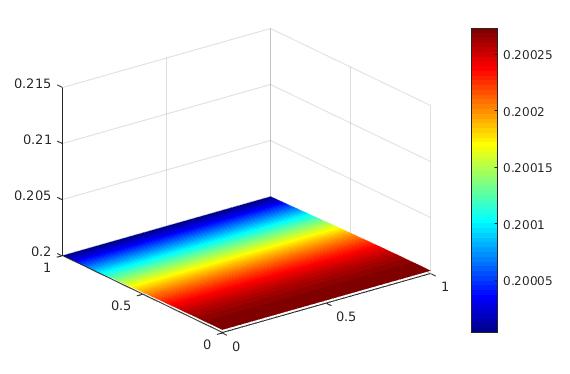}
\includegraphics[scale=0.88]{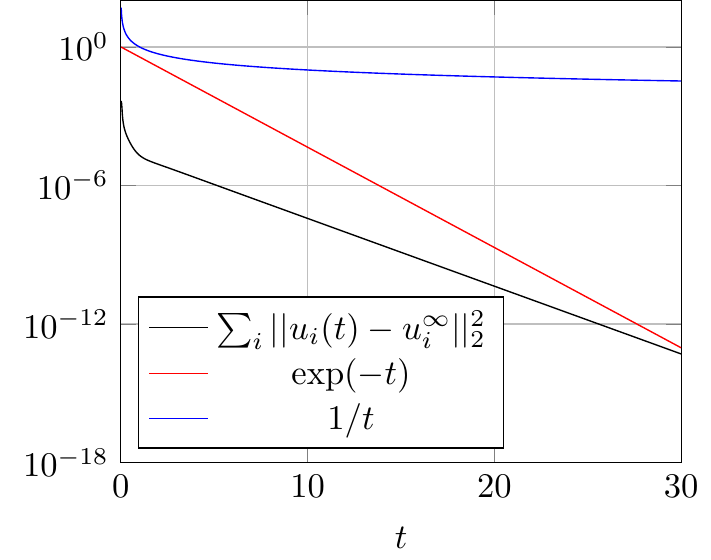}
\end{center}
\caption{Evolution of the biomass $M$ at different times with $p$ defined in
\eqref{p.2}. Top left: $t=1$, top right: $t=5$, bottom: $t=10$.
Bottom right: Convergence of the solutions to the steady states in $L^2$ norm with $p$ defined by~\eqref{p.2}.}
\label{Fig5}
\end{figure}



\end{document}